\numberwithin{equation}{section}
\newcommand{\Z}{\mathbb{Z}}
\newcommand{\Q}{\mathbb{Q}}
\newcommand\FF{\mathbb{F}}
\newtheorem{lemma}{Lemma}[section]
\newtheorem{theorem}[lemma]{Theorem}
\newtheorem{proposition}[lemma]{Proposition}
\newtheorem{definition}[lemma]{Definition}
\newtheorem{remark}{Remark}
\title{\vspace{ - \baselineskip}\sffamily\bfseries Effective convergence of coranks of random R\'edei matrices}
\author[1]{Peter Koymans\thanks{Vivatsgasse 7, 53111  Bonn, Germany, koymans@mpim-bonn.mpg.de}}
\author[1]{Carlo Pagano\thanks{Vivatsgasse 7, 53111  Bonn, Germany, carlein90@gmail.com}}
\affil[1]{Max Planck Institute for Mathematics, Bonn}
\date{\today}
\begin{document}
\maketitle

\begin{abstract}
We give effective estimates for the $l^1$-distance between the corank distribution of $r \times r$ R\'edei matrices and the measure predicted by the Cohen--Lenstra heuristics. To this end we pinpoint a class of stochastic processes, which we call $c$-transitioning. These stochastic processes are well approximated by Markov processes, and we give an effective ergodic theorem for such processes. With this tool we make effective a theorem of Gerth \cite{Gerth} that initiated the study of the Cohen--Lenstra heuristics for $p = 2$. 

Gerth's work triggered a series of developments that has recently culminated in the breakthrough of Smith \cite{Smith}. The present work will be used in upcoming work of the authors on further applications of Smith's ideas to the arithmetic of quadratic fields. To this end we extend our main result to several other families of matrix spaces that occur in the study of integral points on the equation $x^2 - dy^2 = l$ as $d$ varies.
\end{abstract}

\section{Introduction}
In $1983$ Cohen and Lenstra \cite{cohen--lenstra} put forth a systematic set of conjectures on the distribution of the $p$-Sylow of class groups of quadratic fields, for an odd prime $p$. These conjectures postulate that these $p$-Sylows should behave as randomly as possible in the following natural sense. Each finite abelian $p$-group $A$ is conjectured to appear as the $p$-Sylow of the class group of a quadratic field with probability proportional to $\frac{1}{|\text{Aut}(A)|}$ and $\frac{1}{|\text{Aut}(A)|\cdot |A|}$ for, respectively, imaginary and real quadratic fields. 

A substantial amount of effort has subsequently been invested in detecting a possible source of randomness in the behavior of class groups of quadratic fields. In $1987$ Friedman and Washington \cite{Friedman-Washington} considered the case of quadratic function fields and observed that the Cohen--Lenstra's prediction could be reformulated in terms of random matrices. Indeed, they suggested as potential source of randomness the behavior of the Frobenius operator acting on the Tate module of the corresponding hyperelliptic curve. 

This lead to a reinterpretation of the Cohen--Lenstra heuristics in terms of corank statistics of large random matrices, a point of view that has been further explored in the work of Wood \cite{Wood1} and proved fruitful in her recent extension of the conjectures of Cohen and Lenstra to a non-abelian setting \cite{Wood2}. Incidentally, the matrices occurring over function fields are constrained to respect a symplectic pairing (the Weil pairing), but it can be shown that this does not affect the limiting distribution, a feature that plays a major role also in the present work. In the context of quadratic function fields Ellenberg, Venkatesh and Westerland \cite{EVW} were able to make substantial progress relating these conjectures to the homological stability of Hurwitz spaces. They then used topological methods to make progress on the latter.  

For quadratic number fields the situation is currently much more mysterious if $p$ is an odd prime, apart from the average $3$-torsion of quadratic fields \cite{BST, DH}. As we shall now explain, the story for $p = 2$ is entirely different.  

In $1801$ Gauss \cite{Gauss} gave a description of the $2$-torsion of the narrow class group $\text{Cl}(\Q(\sqrt{d}))$ of a quadratic field $\mathbb{Q}(\sqrt{d})$. In particular Gauss showed that 
\[
\dim_{\FF_2} \text{Cl}(\Q(\sqrt{d}))[2] = \omega(\Delta_{\mathbb{Q}(\sqrt{d})}) - 1,
\]
where $\omega(\cdot)$ denotes the number of distinct prime divisors and $\Delta_{\mathbb{Q}(\sqrt{d})}$ is the discriminant of $\mathbb{Q}(\sqrt{d})$. Recently, the authors \cite{KP2} investigated the $2$-torsion of the narrow class group of multiquadratic fields $\mathbb{Q}(\sqrt{d_1}, \ldots, \sqrt{d_n})$. 

This readily shows that the $2$-Sylow of the class group of a quadratic field is not a random finite, abelian $2$-group in the sense of Cohen--Lenstra. A natural guess, which can be found implicitly in \cite{Gerth} and explicitly in \cite{Gerth3}, is that instead the group $2\text{Cl}(\mathbb{Q}(\sqrt{d}))[2^{\infty}]$ is a random finite, abelian $2$-group.

In $1984$ Gerth \cite{Gerth} gave the first evidence for the correctness of this guess. During the proof he made use of an explicit description of $2\text{Cl}(\mathbb{Q}(\sqrt{d}))[4]$ due to R\'edei \cite{Redei}. R\'edei was able to relate the dimension of the space $2\text{Cl}(\mathbb{Q}(\sqrt{d}))[4]$ to the corank of a certain $r \times r$ matrix constructed out of the mutual Legendre symbols of the prime divisors of $d$, where
$$
r := \omega(\Delta_{\mathbb{Q}(\sqrt{d})/\mathbb{Q}}).
$$
This matrix is now commonly referred to as the \emph{R\'edei matrix} of the field $\mathbb{Q}(\sqrt{d})$. Due to quadratic reciprocity the R\'edei matrix is far from being a random $r \times r$ matrix. Gerth showed that when one \emph{fixes} $r$, then the R\'edei matrices of $\mathbb{Q}(\sqrt{d})$, as $d$ varies, equidistribute in the space of $r \times r$ matrices satisfying the constraints of quadratic reciprocity. He then showed that the corresponding densities converge to the limiting distribution predicted by the above guess as $r$ goes to infinity, despite the constrained shape of the matrices. Gerth also extended his work to cyclic degree $p$ extensions \cite{Gerth2} and formulated a conjecture for the distribution of their $p$-Sylows, which is, in case $p = 2$, precisely the modified version of the Cohen--Lenstra conjectures mentioned above. This then became known as Gerth's conjecture or the Cohen--Lenstra--Gerth heuristic.

Despite Gerth's progress, the distribution of the $4$-torsion as $d$ varies among all squarefree integers was still an open problem. In $2006$ Fouvry and Kl\"uners \cite{FK1, fouvry--kluners}, using a different approach, were able to solve this problem and showed that the $4$-rank of class groups of quadratic fields have the limiting distribution predicted by the Cohen--Lenstra--Gerth heuristic. Instead of directly trying to handle the randomness of the R\'edei matrices, they expressed the $4$-rank of $\mathbb{Q}(\sqrt{d})$ as a sum of Legendre symbols. They proved oscillation of this sum by using ideas introduced in the seminal work of Heath--Brown on $2$-Selmer groups \cite{HB}.

On the one hand this method has proved to be very robust for the $4$-torsion and analogous statistics: a similar line of attack has been subsequently used for cyclic degree $p$ fields \cite{Klys}, for ray class groups of imaginary quadratic fields \cite{Pagano--Sofos} and very recently by Fouvry and the authors \cite{FKP} for the $4$-rank of $\text{Cl}(\mathbb{Q}(i, \sqrt{d}))$. On the other hand the situation for the $8$-torsion and higher powers remained mysterious until very recently. 

In $2017$ Smith \cite{Smith}, improving on earlier work of himself on the $8$-torsion \cite{smith1}, was able to prove Gerth's conjecture for imaginary quadratic fields in its entirety. In $2018$ this was extended by the authors \cite{KP1} to all cyclic degree $p$ extensions, conditional on GRH. 

In Smith's work the description of the $4$-torsion in terms of R\'edei matrices becomes again central. In \cite{Smith} he manages to prove that the R\'edei matrices attached to squarefree integers $d$ is, for the purposes of the corank statistics, equidistributed in the space of all possible R\'edei matrices, when one lets $d$ run through all squarefree integers. In this context he claims that the rate of convergence in the main result of Gerth \cite[Theorem 4.3]{Gerth} can be made effective. The main result of the present work fills this gap in the literature by showing that this is indeed the case. 

We prove effective convergence of the corank distribution of a large random R\'edei matrix to the probability distribution predicted by Cohen--Lenstra--Gerth. For an integer $0 \leq \kappa \leq r$, let $X_r(\text{R\'edei}, \kappa)$ be the random variable that computes the probability that a uniformly chosen $r \times r$ matrix $A$ with coefficients in $\FF_2$ satisfying 
\[
A(i, h) = A(h, i) + 1 \text{ for all } 1 \leq i < h \leq \kappa, \quad A(i, h) = A(h, i) \text{ for all } \kappa < h \leq r \text{ and } 1 \leq i \leq r
\]
has corank equal to $j$. For an integer $r \geq 0$ denote by $\mu_r(\text{R\'edei})$ the probability distribution on $\mathbb{Z}_{\geq 0}$ given by the corank of a $r \times r$ random R\'edei matrix, i.e.
\[
\mu_r(\text{\'Redei})(j) = \frac{1}{2^r} \sum_{\kappa = 0}^r \binom{r}{\kappa} \mathbb{P}(X_r(\text{R\'edei}, \kappa) = j).
\]
Denote by $\pi_{\text{C.L.}} : \Z_{\geq 0} \rightarrow \mathbb{R}_{\geq 0}$ the distribution of the rank of the $2$-torsion of random abelian $2$-groups, in the sense of Cohen--Lenstra--Gerth. Writing
\[
\eta_k(t) := \prod_{j = 1}^k (1 - t^{-j})
\]
for $k \in \Z_{\geq 0} \cup \{\infty\}$, we have the explicit formula
\begin{align}
\label{eCL}
\pi_{\text{C.L.}}(j) = 2^{-j^2} \eta_\infty(2) \eta_j(2)^{-2},
\end{align}
which equals the probability that a uniformly chosen random $r \times r$ matrix with coefficients in $\FF_2$ has corank $j$, as $r$ goes to infinity.

\begin{theorem} 
\label{main}
There exists $C \in \mathbb{R}_{>0}$ and $\rho \in (0,1)$ such that
$$
||\mu_r(\emph{R\'edei}) - \pi_{\emph{C.L.}}||_1 \leq C \cdot \rho^r
$$
holds for every $r \in \mathbb{Z}_{\geq 0}$. 
\end{theorem}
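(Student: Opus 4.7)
The plan is to realize the corank of the Rédei matrix as the endpoint of a stochastic process indexed by the submatrix size $h$, and to reduce the problem to an application of the effective ergodic theorem for $c$-transitioning processes developed earlier in the paper.

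Condition on $\kappa$ and, for $h = 0, 1, \ldots, r$, let $c_h$ denote the $\FF_2$-corank of the top-left $h \times h$ principal submatrix $A_h$. Passing from $A_h$ to $A_{h+1}$ corresponds to sampling a uniformly random vector $v \in \FF_2^{h+1}$ which serves as the new column; the new row is then forced by the symmetry constraint (for $h+1 > \kappa$) or by the Rédei antisymmetry constraint (for $h+1 \leq \kappa$). A direct linear-algebra computation, classical in the theory of random symmetric and alternating matrices over $\FF_2$, expresses the conditional distribution of $c_{h+1}$ given $(A_1, \ldots, A_h)$ in terms of $c_h$, the kernel of $A_h$, and the value at $v$ of a canonical quadratic or alternating form attached to $A_h$. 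After averaging over $v$, the dependence on $(A_1, \ldots, A_{h-1})$ beyond the integer $c_h$ is visible only through an error of size $O(2^{-c_h})$, which is precisely what the $c$-transitioning condition quantifies.

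Having verified the $c$-transitioning property, I would next identify the stationary distribution of the approximating Markov chain and establish a spectral gap. The transition probabilities $\mathbb{P}(c_{h+1} = j \mid c_h = i)$ can be written down explicitly in each regime, and the probability of reducing the corank by at least one unit is bounded below by $1 - O(2^{-c_h})$, which supplies a geometric drift toward small coranks. A standard generating-function identity then shows that the unique invariant distribution of the resulting chain equals $2^{-j^2} \eta_\infty(2) \eta_j(2)^{-2} = \pi_{\text{C.L.}}(j)$. Plugging these ingredients into the effective ergodic theorem yields a uniform bound $\|\mathbb{P}(c_r = \cdot \mid \kappa) - \pi_{\text{C.L.}}\|_1 \leq C \rho^r$ in $\kappa$, and averaging against the binomial weights $\binom{r}{\kappa}/2^r$ gives Theorem \ref{main}.

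The main obstacle I anticipate is handling the structural change of the matrix at $h = \kappa$, where the relation between column and row switches from antisymmetric to symmetric. One must show that this one-time discontinuity in the transition kernel is absorbed by the $c$-transitioning framework, with an error that remains summable in $r$, and that the limiting distribution is not perturbed by the change of regime. A secondary point is to make the constants in the effective ergodic theorem uniform in the initial corank $c_0 = 0$ and in $\kappa$, so that a single pair $(C, \rho)$ handles all $r$ and all $\kappa \in \{0, \ldots, r\}$ simultaneously; once this bookkeeping is settled, the argument should proceed fairly mechanically from the general machinery.
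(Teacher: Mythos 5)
Your high-level architecture matches the paper's: realize the corank of the top-left $h \times h$ principal submatrix as a stochastic process, verify it is $c$-transitioning with the transition matrix $Q_{\text{C.L.}}$, invoke the effective ergodic theorem, treat the change of regime at $h = \kappa$, and average over $\kappa$ with binomial weights. But there are two concrete gaps in your outline, and the second one is where most of the actual work of the paper lives.

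First, your quantification of the approximation to Markov is wrong in a way that would break the argument. You assert that, after averaging over $v$, the dependence on the full history is an ``error of size $O(2^{-c_h})$, which is precisely what the $c$-transitioning condition quantifies.'' This is not what $c$-transitioning says, and it is not a usable bound. The definition in Section~\ref{ergodic for c-transitioning} asks for an exceptional event $Z_i$, on whose complement the transition is \emph{exact} (not merely approximate), with $\mathbb{P}(Z_i = 1) \leq c^i$ decaying exponentially in the step index $i$. A per-step error of $O(2^{-c_h})$ is of order one whenever the running corank is $0$ or $1$, which occurs with positive probability under $\pi_{\text{C.L.}}$; summing such errors over $r$ steps gives a contribution growing linearly in $r$, not tending to zero. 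The paper instead isolates an explicit structural bad event --- for $i \leq \kappa$ that $\omega_i(B) H_i = 0$, and for $i > \kappa$ that the last $i - \kappa$ columns are dependent or that some vector $x$ with $\pi_{[\kappa]}(x) = H_\kappa$ lies in the kernel --- shows that on its complement the hypothesis $\ker(\omega_i(B)) \cap \ker(\omega_i(B)^T) = \{0\}$ of Proposition~\ref{transition as CL} holds exactly, and bounds the bad-event probability by a union bound that is exponentially small in $i$ (not in $c_h$).

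Second, and consequently, your ``secondary bookkeeping point'' of obtaining a single pair $(C, \rho)$ uniformly over all $\kappa \in \{0, \ldots, r\}$ cannot be carried out. For $\kappa$ near $0$ the matrix is essentially symmetric, so $\ker A_h = \ker A_h^T$ and the genericity condition fails whenever the matrix is singular, an event of probability bounded away from $0$ in $h$; in particular $\mathbb{P}(Z_i = 1) \leq c^i$ fails and the corank distribution at $\kappa = 0$ does not even converge to $\pi_{\text{C.L.}}$. Symmetrically, $\kappa$ near $r$ is problematic. The paper therefore fixes $\epsilon \in (0, \tfrac{1}{2})$, proves the $c$-transitioning bound with constants depending only on $\epsilon$ for $\kappa \in [\epsilon r, (1 - \epsilon) r]$, and disposes of the binomial tails $\kappa < \epsilon r$ and $\kappa > (1 - \epsilon) r$ by Hoeffding's inequality, which bounds their total weight by $2 \exp(-2(\tfrac{1}{2} - \epsilon)^2 r)$. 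That split is not cosmetic; without it the claimed uniformity is false, and you would need to supply this extra step to complete the proof.
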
 

As explained in Remark \ref{rEGerth}, Theorem \ref{main} is an effective version of Gerth's main result \cite[Theorem 4.3]{Gerth}. The method of proof of Theorem \ref{main}, which we summarize below, can be adapted to other matrix spaces. For instance, in Theorem \ref{limiting distribution for Pellian families} we extend this result to the case of spaces of R\'edei matrices occurring in the study of the solubility of the equation
$$
x^2 - dy^2 = l
$$
with $x,y \in \mathbb{Z}$. Here $l$ is fixed and $d$ varies over squarefree integers divisible by $l$.

We remark that a similar analysis is not required in the case of cyclic degree $p$ fields, where $p$ is an odd prime. The difference is explained by quadratic reciprocity. This highly constraints the space of R\'edei matrices of quadratic fields. However, the key point, already present in Gerth's work \cite{Gerth}, is that for \emph{most} $r \times r$ R\'edei matrices $A$ adding a random row and column to $A$, in a way that the resulting $(r+1) \times (r+1)$ matrix is still R\'edei, the corank transitions with the same probabilistic rules as that of a random matrix. At this point Gerth proceeds with a detailed analysis of the transition rules for the exceptions and construct several explicit Markov processes that allow him to obtain the desired limiting distribution by an intricate approximation argument. 

We completely bypass this intricate step and take the following route instead. We pinpoint the general class of $c$-transitioning processes, which are well approximated by a Markov process in a precise quantitative sense. Then we give an effective ergodic theorem for such processes in Theorem \ref{ergodic with imperfect info} with uniform error term. After that, we rapidly deduce Theorem \ref{main} and several analogues.
\subsection*{Acknowledgements}
We thank Adam Morgan for fruitful discussions. Both authors are grateful to the Max Planck Institute for Mathematics in Bonn for its hospitality and financial support.

\section{An ergodic theorem for Markov chains}
We shall need to work with Markov chains of considerable generality to prove our main theorems. Fortunately, the relevant Markov chains are still simple enough that we shall not need too much machinery from measure theory. Let $\Omega$ be a countable set, which we view as a measurable space by equipping it with the $\sigma$-algebra consisting of all subsets of $\Omega$. In this way we can think of measures simply as functions $\Omega \rightarrow \mathbb{R}_{\geq 0}$ and we shall often do so implicitly.

For every $x \in \Omega$, there is a natural random variable $X(x)$ on $\Omega$, which assigns to $x$ probability $1$ and $0$ to all other points. Let $P: \Omega \times \Omega \rightarrow \mathbb{R}_{\geq 0}$ be a function such that $\{x : P(x, y) > 0\}$ is finite for all $y \in \Omega$, $\{y : P(x, y) > 0\}$ is finite for all $x \in \Omega$ and
\[
\sum_{y \in \Omega} P(x, y) = 1
\]
for all $x \in \Omega$. We can think of $P$ as an infinite matrix with only finitely many non-zero entries in each row and column such that the sum of the entries in every row is $1$, and we call such $P$ transition matrices. Furthermore, if $\mu : \Omega \rightarrow \mathbb{R}_{\geq 0}$ is a probability measure, we can left multiply
\[
(\mu P)(y) = \sum_{x \in \Omega} P(x, y) \mu(x)
\]
to obtain another probability measure. The Markov chains that we shall encounter will start with a random variable $X(x)$ for some $x \in \Omega$, and the next random variables are obtained by repeated application of the same $P$ satisfying the assumptions above. For $A \subseteq \Omega$, we write $P^n(x, A)$ for the probability that the Markov chain is in $A$ after $n$ steps, assuming that the Markov chain starts in $x$, i.e. with the random variable $X(x)$.

Let $\psi: \Omega \rightarrow \mathbb{R}_{\geq 0}$ be a measure such that $\psi(\Omega) > 0$. We say that $P$ is $\psi$-irreducible if for every $x \in \Omega$ and every $A \subseteq \Omega$ with $\psi(A) > 0$, there is some positive integer $n$ such that $P^n(x, A) > 0$. We say that $P$ is aperiodic if
\[
\gcd(\{n \geq 1 : P^n(x, x) > 0\}) = 1 \text{ for all } x \in \Omega
\]
and extremely aperiodic if
\[
P(x, x) > 0 \text{ for all } x \in \Omega.
\]
Certainly, if $P$ is extremely aperiodic, then it is also aperiodic. We say that a function $V : \Omega \rightarrow \mathbb{R}_{\geq 1}$ is a drift function if there is some $\lambda < 1$ such that
\[
PV(x) \leq \lambda V(x)
\]
for all but finitely many $x \in \Omega$ and furthermore
\[
|\{x \in \Omega : V(x) \leq r\}| < \infty
\]
for every real number $r$. Here $PV$ denotes the function obtained by right multiplying $P$ with $V$.

\begin{theorem} 
\label{ergodic theorem}
Let $\Omega$ be a countable set, and let $P: \Omega \times \Omega \rightarrow \mathbb{R}_{\geq 0}$ be a transition matrix. Assume that $P$ is $\psi$-irreducible and extremely aperiodic. Then if $V: \Omega \rightarrow \mathbb{R}_{\geq 1}$ is a drift function, there is a unique probability measure $\pi: \Omega \rightarrow \mathbb{R}_{\geq 0}$ such that $\pi P = P$. Furthermore, there are constants $R > 0$ and $\rho < 1$ such that
\[
\sum_{y \in \Omega} \left|P^n(x, y) - \pi(y)\right| \leq RV(x)\rho^n
\]
for every $x \in \Omega$.
\end{theorem}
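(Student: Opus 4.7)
The plan is to apply the classical Meyn--Tweedie methodology of geometric ergodicity, adapted to our countable setting: combine a drift condition (controlling return times to a finite ``small set'') with a minorization on that set, and then deduce geometric convergence via coupling. Since the sublevel sets of $V$ are finite, we may choose $R_0$ large enough that $C := \{x \in \Omega : V(x) \leq R_0\}$ is finite and the drift inequality $PV \leq \lambda V$ holds on $\Omega \setminus C$. This immediately yields an augmented drift condition $PV \leq \lambda V + b \mathbf{1}_C$ for some constant $b \geq 0$.

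For the minorization, since $\psi(\Omega) > 0$ and $\Omega$ is countable, there exists some $y^\ast \in \Omega$ with $\psi(\{y^\ast\}) > 0$. By $\psi$-irreducibility, for every $x \in C$ there is some integer $n_x$ with $P^{n_x}(x, y^\ast) > 0$. Extreme aperiodicity at $y^\ast$, namely $P(y^\ast, y^\ast) > 0$, lets us replace $n_x$ by any larger integer without losing positivity, so after aligning all $n_x$ to a common value $n_0$ and using finiteness of $C$ we obtain $\varepsilon := \min_{x \in C} P^{n_0}(x, y^\ast) > 0$. This furnishes the Doeblin-type minorization $P^{n_0}(x, \cdot) \geq \varepsilon \delta_{y^\ast}$ for all $x \in C$.

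For the drift side, iterating the augmented inequality gives $\mathbb{E}_x[V(X_n)] \leq \lambda^n V(x) + b/(1-\lambda)$, and a standard supermartingale argument applied to $\beta^{n}V(X_n)$ for suitable $\beta>1$ upgrades this to an exponential moment bound on the first return time $\tau_C$ to $C$, of the shape $\mathbb{E}_x[\beta^{\tau_C}] \leq c\, V(x)$. In particular the chain is positive recurrent, and together with $\psi$-irreducibility and aperiodicity this yields the existence and uniqueness of an invariant probability measure $\pi$ with $\pi(V) < \infty$.

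The last step is a coupling argument. Run two copies of the chain, one started at $x$ and one at $\pi$; whenever both copies lie in $C$, attempt to couple using the minorization: with probability $\varepsilon$ both chains jump to $y^\ast$ in $n_0$ steps and then evolve jointly, and otherwise they evolve according to the residual kernel (Nummelin splitting). Applied to the product chain on $\Omega \times \Omega$ with drift function $V(x) + V(y)$, which satisfies a drift condition outside the finite set $C \times C$, the same geometric tail argument shows that successive joint visits to $C \times C$ happen in geometrically distributed gaps, so the coupling time $T$ obeys $\mathbb{E}_x[\rho^{-T}] \leq R\, V(x)$ for some $\rho < 1$ and $R > 0$; the coupling inequality $\sum_{y} |P^n(x,y) - \pi(y)| \leq 2\, \mathbb{P}_x(T > n)$ then closes the argument. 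The main technical obstacle will be the bookkeeping in the coupling step: one must verify that the successive attempts to couple at joint visits to $C \times C$ behave as genuinely independent Bernoulli trials with success probability $\varepsilon$, and extract a rate $\rho$ and constant $R$ that are uniform in $x$ with only the linear dependence $RV(x)$.
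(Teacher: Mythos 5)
Your proposal is correct in spirit but takes a genuinely different route from the paper. The paper's proof is a short \emph{verification-and-citation} argument: it observes that the finite exceptional set $C$ is petite (because extreme aperiodicity makes every finite set petite in the sense of Meyn--Tweedie, Section 5.5), checks condition (iii) of \cite[Theorem 15.0.1]{MT}, invokes that theorem to get geometric convergence in the $V$-norm, and then downgrades the $V$-norm bound to the $\ell^1$ bound using $V \geq 1$. What you do instead is essentially \emph{reprove} that cited theorem from scratch in the countable setting: your minorization step, where extreme aperiodicity at $y^\ast$ plus finiteness of $C$ is used to align the hitting times $n_x$ to a common $n_0$ and extract a uniform $\varepsilon > 0$, is precisely the argument that shows $C$ is a small (hence petite) set, i.e.\ it unpacks the paper's one-line petiteness assertion. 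The rest of your argument---iterating the augmented drift $PV \leq \lambda V + b\mathbf{1}_C$, the supermartingale/Foster--Lyapunov bound on $\mathbb{E}_x[\beta^{\tau_C}]$, Nummelin splitting, and the coupling inequality---is the standard proof of the geometric ergodic theorem that Meyn--Tweedie's Theorem 15.0.1 encapsulates. What your route buys is self-containedness and a direct derivation of the total-variation bound (so you can skip the paper's final $V$-norm to $\ell^1$ conversion step); what it costs is length and, as you yourself flag, the nontrivial bookkeeping in the coupling/splitting step, which remains a sketch. That bookkeeping (in particular the care needed when $n_0 > 1$, so that one works with the $n_0$-skeleton and verifies the regeneration attempts are genuinely i.i.d.\ Bernoulli($\varepsilon$)) is standard but is exactly the part that the paper outsources to \cite{MT}. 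Both routes are valid; the paper optimizes for brevity, yours for transparency.
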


\begin{proof}
Let $C$ be the finite set of exceptions to the inequality
\[
PV(x) \leq \lambda V(x).
\]
Since our Markov chain is extremely aperiodic, it follows that any finite subset of $\Omega$ is petite, see Section 5.5 of \cite{MT} for the definition of petite. In particular, $C$ is petite. Then condition (iii) of Theorem 15.0.1 in \cite{MT} is satisfied (in their notation $\Delta V(x) := PV(x) - V(x)$). It follows from \cite[Theorem 15.0.1]{MT} that $\pi$ exists, and that there are constants $R > 0$ and $r > 1$ such that
\begin{equation}
\label{eErgodic}
\sum_{n = 1}^\infty r^n ||P^n(x, \cdot) - \pi||_V \leq RV(x).
\end{equation}
Here $||\cdot||_V$ is by definition
\[
||g||_V = \sup_{f : |f| \leq V} \left|\sum_{y \in \Omega} f(y) \cdot g(y)\right|.
\]
Note that equation (\ref{eErgodic}) implies that there are $R' > 0$ and $\rho < 1$ such that
\[
||P^n(x, \cdot) - \pi||_V \leq R'V(x)\rho^n.
\]
This proves the theorem by choosing $f$ to be the function with $|f| = 1$ and $f(y) > 0$ if and only if $P^n(x, y) - \pi(y) > 0$.
\end{proof}

\section{The equilibrium for almost transitioning processes} 
\label{ergodic for c-transitioning}
Let $Q$ be a transition matrix on $\mathbb{Z}_{\geq 0}$. Let $d$ be a positive real number. We say that $Q$ is $d$-driftable in case $x \mapsto d^x$, viewed as map from $\mathbb{Z}_{\geq 0}$ to $\mathbb{R}_{\geq 0}$, is a drift function for $Q$. We further assume that $Q$ is $\psi$-irreducible for some non-trivial measure $\psi$ on $\mathbb{Z}_{\geq 0}$ and that $Q$ is extremely aperiodic. Observe that multiplication by $Q$ on the left on $l^{1}(\mathbb{Z}_{\geq 0})$ gives a bounded linear operator with 
$$
||Q||_1=1.
$$
Indeed given $v \in l^1(\mathbb{Z}_{\geq 0})$ with $||v||_1=1$, we have that, writing $w$ for the vector obtained from $v$ by taking absolute values componentwise,
$$
||vQ||_1 \leq ||wQ||_1,
$$
since the entries of $Q$ are non-negative. On the other hand $||wQ||_1=1$ because $Q$ is a transition matrix. 

Let $(A, \mathbb{P})$ be a probability space (the $\sigma$-algebra will not play a role and hence we do not introduce notation for it). Let $c$ be a real number in $(0,1)$. Suppose to have for each integer $i \geq 0$ random variables
$$
X_i: A \to \{0, \dots, i\}.
$$
We say that the sequence $\{X_i\}_{i \in \mathbb{Z}_{\geq 0}}$ is $c$-transitioning with $Q$ in case there exists a sequence of random variables
$$
Z_i: A \to \{0,1\}
$$
such that $\mathbb{P}(Z_i = 1) \leq c^i$ and
$$
\mathbb{P}(X_{i + 1} = j | X_i = s, Z_i = 0) = Q(s, j)
$$
for each $s \in \{0, \dots, i\}$ and $j \in \{0, \dots, i + 1\}$.

For a random variable $X : A \to \mathbb{Z}_{\geq 0}$, write $\mu_X$ for the vector in $l^1(\mathbb{Z}_{\geq 0})$ given by the distribution of $X$, that is
$$
\mu_X(j) := \mathbb{P}(X = j).
$$

\begin{theorem}
\label{ergodic with imperfect info}
Let $d \in \mathbb{R}_{ \geq 0}$ and let $c \in (0,1)$. Let $Q$ be a $d$-driftable transition matrix on $\mathbb{Z}_{\geq 0}$. Let $\{X_i\}_{i \in \mathbb{Z}_{\geq 0}}$ be a sequence of random variables, with $X_i$ taking values in $\{0, \dots, i\}$. Suppose that $\{X_i\}_{i \in \mathbb{Z}_{\geq 0}}$ is $c$-transitioning with $Q$. Then there exists a unique probability measure $\pi$ on $\mathbb{Z}_{\geq 0}$ with $\pi Q=\pi$ and constants $C_{Q,c,d} \in \mathbb{R}_{>0}, \rho_{Q,c,d} \in (0,1)$ such that 
$$
||\mu_{X_r} - \pi||_1 \leq C_{Q,c,d} \cdot \rho_{Q,c,d}^r
$$
for each $r \in \mathbb{Z}_{\geq 0}$.
\end{theorem}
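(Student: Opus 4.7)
The plan is to exploit the fact that the ergodic Theorem \ref{ergodic theorem} already handles the Markov case with exponentially decaying error, and then treat the $c$-transitioning sequence $\{X_i\}$ as a controlled perturbation of that Markov evolution. Existence and uniqueness of $\pi$, together with constants $R > 0$ and $\rho_0 < 1$ satisfying $||Q^n(x,\cdot) - \pi||_1 \leq R d^x \rho_0^n$, will follow directly by applying Theorem \ref{ergodic theorem} to $Q$ with drift function $V(x) = d^x$, using the hypotheses of $\psi$-irreducibility, extreme aperiodicity, and $d$-driftability built into the definition.

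The first concrete step is a per-transition total variation estimate: I claim that $||\mu_{X_{i+1}} - \mu_{X_i} Q||_1 \leq 2c^i$. Conditioning on $Z_i$: on the event $\{Z_i = 0\}$, which has probability at least $1 - c^i$, the $c$-transitioning hypothesis says that $X_{i+1}$ conditioned on $X_i = s$ is distributed as $Q(s, \cdot)$; on the event $\{Z_i = 1\}$ the conditional distribution is arbitrary, but since this event has probability at most $c^i$ its contribution to the total variation is at most $2c^i$.

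The key manipulation is then a telescoping identity. For an intermediate time $t \leq r$ to be chosen, set $a_i := \mu_{X_i} Q^{r-i}$ for $t \leq i \leq r$; summing $a_{i+1} - a_i = (\mu_{X_{i+1}} - \mu_{X_i}Q)Q^{r-i-1}$ gives
$$
\mu_{X_r} - \mu_{X_t} Q^{r-t} = \sum_{i=t}^{r-1} (\mu_{X_{i+1}} - \mu_{X_i} Q) Q^{r-i-1}.
$$
Using the $l^1$-contraction $||vQ||_1 \leq ||v||_1$ observed in the preamble to the theorem, the telescoped error is bounded by $\sum_{i=t}^{r-1} 2c^i \leq \frac{2c^t}{1-c}$. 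Meanwhile, Theorem \ref{ergodic theorem} applied to $\mu_{X_t}$ together with the deterministic bound $X_t \leq t$ gives
$$
||\mu_{X_t} Q^{r-t} - \pi||_1 \leq R \cdot \mathbb{E}[d^{X_t}] \cdot \rho_0^{r-t} \leq R d^t \rho_0^{r-t},
$$
hence $||\mu_{X_r} - \pi||_1 \leq R d^t \rho_0^{r-t} + \frac{2c^t}{1-c}$.

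The final step is to pick $t = \lfloor \alpha r \rfloor$ for an $\alpha \in (0,1)$ small enough that both $d^\alpha \rho_0^{1-\alpha} < 1$ and $c^\alpha < 1$; such an $\alpha$ exists because $\rho_0 < 1$ and $c < 1$, independently of $d \geq 1$. This yields geometric decay with rate $\rho_{Q,c,d} := \max(d^\alpha \rho_0^{1-\alpha}, c^\alpha) < 1$ and a suitable constant $C_{Q,c,d}$. The main tension to manage is that the ergodic prefactor $\mathbb{E}[V(X_t)] \leq d^t$ grows with $t$ while the accumulated perturbation error $c^t$ shrinks with $t$ and the Markov relaxation $\rho_0^{r-t}$ shrinks with $r-t$; the optimization of $\alpha$ is precisely what balances these three competing rates, and is the only step requiring any care beyond routine bookkeeping.
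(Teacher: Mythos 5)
Your proposal is correct and follows essentially the same route as the paper: split at an intermediate time $t = \lfloor\alpha r\rfloor$, control the tail $\mu_{X_t}Q^{r-t}\to\pi$ via Theorem \ref{ergodic theorem} with the bound $\mathbb{E}[d^{X_t}]\leq d^t$, control the accumulated perturbation via $\|Q\|_1=1$ and $\mathbb{P}(Z_i=1)\leq c^i$, and optimize $\alpha$. The only cosmetic difference is that the paper tracks the defect through the two vectors $v_i(j)=\mathbb{P}(X_i=j,Z_i=1)$, $w_i(j)=\mathbb{P}(X_{i+1}=j,Z_i=1)$ in the identity $\mu_{X_{i+1}}=(\mu_{X_i}-v_i)Q+w_i$, whereas you package this into the single per-step bound $\|\mu_{X_{i+1}}-\mu_{X_i}Q\|_1\leq\|v_i\|_1+\|w_i\|_1\leq 2c^i$ before telescoping; both are the same estimate.
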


\begin{proof}
Let us define two sequences of vectors $\{v_i,w_i\}_{i \in \mathbb{Z}_{\geq 0}}$ with $v_i, w_i \in l^1(\mathbb{Z}_{\geq 0})$ in the following manner
$$
v_i(j) := \mathbb{P}(X_i = j, Z_i = 1)
$$
and
$$
w_i(j) := \mathbb{P}(X_{i + 1} = j, Z_i = 1)
$$
for each $j \in \mathbb{Z}_{\geq 0}$. A simple calculation, using that our process transitions correctly when $Z_i = 0$, gives that
$$
(\mu_{X_i} - v_i)Q + w_i = \mu_{X_{i + 1}}
$$
for each $i \in \mathbb{Z}_{\geq 0}$.  

Applying iteratively this identity we find that for each $i \in \mathbb{Z}_{\geq 0}$ and each $h \in \mathbb{Z}_{\geq 0}$
$$
\mu_{X_{i + h}} = \mu_{X_i} Q^h - \sum_{s = 0}^{h - 1} v_{i + s} Q^{h - s} + \sum_{s = 0}^{h - 1} w _{i + s} Q^{h - s - 1}.
$$
Let us now pick $\epsilon$ in $(0,1)$ and write $g(r) := \lfloor \epsilon \cdot r \rfloor$ and $h(r) := r - g(r)$. By the triangle inequality we have
$$
||\mu_{X_r} - \pi||_1 \leq ||\mu_{X_{g(r)}} Q^{h(r)} - \pi||_1 + \sum_{s = g(r)}^{r - 1} ||v_s Q^{r - s}||_1 + \sum_{s = g(r)}^{r - 1} ||w_s Q^{r - s -1}||_1.
$$
Thanks to Theorem \ref{ergodic theorem}, there are $R \in \mathbb{R}_{\geq 0}$ and $\rho \in (0,1)$ depending entirely on $Q$ and $d$ such that the first summand is bounded by 
$$
R \cdot d^{g(r)} \cdot \rho^{h(r)}.
$$
Now choose $\epsilon \in (0,1)$ so that $d^{\epsilon} \rho^{1 - \epsilon} < 1$. This choice makes the expression smaller that $C \cdot \rho'^r$ for some $\rho'$ depending only on $Q$ and $d$. Now we focus on
$$
\sum_{s = g(r)}^{r - 1} (||v_sQ^{r-s}||_1+||w_sQ^{r-s-1}||_1).
$$
Using that $||Q||_1=1$ and that
$$
||v_s||_1 = ||w_s||_1 \leq c^s,
$$
we find the upper bound
$$
2 \cdot \sum_{s = g(r)}^{r - 1} c^s = O_c(c^{g(r)}) = O_c(c^{\epsilon r}). 
$$
This gives the desired conclusion. 
\end{proof}

\begin{remark} 
\label{transitions at finite level}
From the proof, it is clear that we reach the same conclusion of Theorem \ref{ergodic with imperfect info} on $||\mu_{X_r} - \pi||_1$ as long as we have the definition of $c$-transitioning for all indices up to $r$, the ones after $r$ being clearly irrelevant for the estimate at stage $r$. This point will be important in the proof of Theorem \ref{limiting distribution for redei matrices}.
\end{remark}

\section{Corank distributions of matrix spaces}
We now study the rank distribution in a number of matrix spaces. Such spaces, often occurring in arithmetic applications, arise by randomly adding to a given matrix a row and a column subject to certain \emph{rules}. We formalize this notion as follows. Write $\FF_2^{[n]}$ for the free $\FF_2$ vector space over $[n]$. For a subset $S \subseteq [n] := \{1, \dots, n\}$, we write $\pi_S$ for the natural projection map.

\begin{definition}
A rule is a product space
$$ 
\mathcal{S} := \prod_{i \in \mathbb{Z}_{\geq 0}} S_i,
$$
where $S_i$ is a non-empty subset of $\mathbb{F}_2^{[i]} \times \mathbb{F}_2^{[i]}  \times \mathbb{F}_2$ for each $i$ in $\mathbb{Z}_{\geq 0}$. 
\end{definition}

We can view each $S_i$ as probability space with uniform probability distribution and as discrete topological space. Thanks to the Kolmogorov extension theorem \cite[Theorem 2.4.3]{Tao}, this naturally endows a rule $\mathcal{S}$ with the structure of a probability space (where the sigma algebra is the one generated by the open sets of $\mathcal{S}$, viewed as profinite space).

We shall often use the following construction. To each point $B \in \mathcal{S}$ one can naturally attach an infinite matrix
$$
\omega(B) \in \text{Mat}_{\mathbb{F}_2}(\mathbb{Z}_{\geq 1} \times \mathbb{Z}_{\geq 1}),
$$
together with its sequence of top left minors $\omega_i(B) \in \text{Mat}_{\mathbb{F}_2}([i] \times [i])$. This gives a sequence of random variables
$$
X_i: \mathcal{S} \to \{0, \dots, i\}
$$
given by $B \mapsto \text{co-rk}(\omega_i(B))$. 

Below we consider several different rules $\mathcal{S}$. These rules will give rather different matrix spaces. However, each rule has in common that we are able to find a \emph{generic} class of matrices such that the corank transitions precisely as in the simplest case, namely the class of random matrices. This is given by the transition matrix
\[
Q_{\text{C.L.}}(i, j) =
\left\{
	\begin{array}{ll}
		1 + 2^{- 2i} - 2^{1 - i} & \mbox{if } j = i - 1 \\
		2^{1 - i} - 3 \cdot 2^{-1 - 2i} & \mbox{if } j = i \\
		2^{-1 - 2i} & \mbox{if } j = i + 1
	\end{array}
\right.
\]
and zero otherwise. The matrix $Q_{\text{C.L.}}$ is $\psi$-irreducible for the function $\psi(x) = 1$, and extremely aperiodic. Furthermore, $Q_{\text{C.L.}}$ is $2$-driftable with the exceptional states being $\{0, 1, 2\}$.

This matrix will play the role of $Q$ in Theorem \ref{ergodic with imperfect info}. The rule $\mathcal{S}$ will play the role of $A$ from Section \ref{ergodic for c-transitioning}, and the variables $Z_i$ from that same section will precisely be the detector of genericity, which we define in each space. A direct computation using equation (\ref{eCL}) shows that
$$
\pi_{\text{C.L.}}Q_{\text{C.L.}} = \pi_{\text{C.L.}}.
$$ 
In this way the effective convergence will fall as a formal consequence of Theorem \ref{ergodic with imperfect info}.

\subsection{Rank transition in row-column extension of a matrix}
Let $n$ be a positive integer and let $A \in \text{Mat}_{\mathbb{F}_2}([n] \times [n])$. We denote by $<,>$ the standard inner product of two vectors in $\mathbb{F}_2^{[n]}$. A vector $w$ is in $\text{Im}(A^T)$ if and only if it is in $\text{ker}(A)^{\perp}$. Therefore, if $v$ is in $\text{Im}(A)$, the set $<A^{-1}(v),w>$ consists of a unique number, which by abuse of notation we also denote as $<A^{-1}(v),w>$. 
 
Given $v,w \in \mathbb{F}_2^{[n]}$ and $c \in \mathbb{F}_2$, we denote by $A(v,w,c)$ the matrix in $\text{Mat}_{\mathbb{F}_2}([n+1] \times [n+1])$ given by
\[
(i, j) \mapsto
\left\{
	\begin{array}{ll}
		A(i, j) & \mbox{if } (i, j) \in [n] \times [n] \\
		v(j) & \mbox{if } i = n + 1, j \in [n] \\
		w(i) & \mbox{if } j = n + 1, i \in [n] \\
		c & \mbox{if } i = j = n + 1.
	\end{array}
\right.
\]
The following fact describes the corank transition $\text{co-rk}(A) \mapsto \text{co-rk}(A(v,w,c))$.

\begin{proposition} 
\label{rank transition}
Let $A$ be in $\emph{Mat}_{\mathbb{F}_2}([n] \times [n])$, $v,w \in \mathbb{F}_2^{[n]}$ and $c \in \mathbb{F}_2$. Then one has the following: \\
$(a)$ $\emph{co-rk}(A,v,w,c)=\emph{co-rk}(A)+1$ if and only if $v \in \emph{Im}(A), w \in \emph{Im}(A^{T})$ and $c=<A^{-1}v,w>$. \\
$(b)$ $\emph{co-rk}(A,v,w,c)=\emph{co-rk}(A)-1$ if and only if $v \not \in \emph{Im}(A), w \not \in \emph{Im}(A^{T})$.\\
$(c)$ $\emph{co-rk}(A,v,w,c)=\emph{co-rk}(A)$ in all remaining cases.  
\end{proposition}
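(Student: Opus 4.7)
The plan is to read off $\text{co-rk}(A(v,w,c))$ from an explicit description of $\ker A(v,w,c)$, which suffices because the corank of the $(n+1) \times (n+1)$ matrix $A(v,w,c)$ differs from $\text{co-rk}(A)$ by an element of $\{-1, 0, +1\}$, and the trichotomy (a)–(c) is exactly the assertion that each of these three values is characterized by the stated conditions. Writing an arbitrary vector of $\FF_2^{n+1}$ as a pair $(y, z)$ with $y \in \FF_2^n$ and $z \in \FF_2$, the kernel equation $A(v,w,c)(y, z)^T = 0$ unpacks into the coupled system
\[
Ay + zw = 0, \qquad \langle v, y \rangle + cz = 0,
\]
so $\ker A(v,w,c)$ can be enumerated by splitting on the binary variable $z$.

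For $z = 0$ the contribution is $\{y \in \ker A : \langle v, y \rangle = 0\}$, of size $2^{\text{co-rk}(A)}$ when $v \in \ker(A)^\perp = \text{Im}(A^T)$ and of size $2^{\text{co-rk}(A) - 1}$ otherwise, since in the latter case $\langle v, \cdot \rangle$ restricts to a non-trivial linear functional on $\ker A$. For $z = 1$ there are no solutions unless $w \in \text{Im}(A)$, in which case $y$ ranges over the coset $A^{-1}(w)$ subject to $\langle v, y \rangle = c$; when $v \in \text{Im}(A^T)$ the pairing $\langle v, A^{-1}(w) \rangle$ is well-defined (in the sense made precise in the preliminary paragraph), producing either $2^{\text{co-rk}(A)}$ solutions when $c$ agrees with this value and none otherwise, while if $v \notin \text{Im}(A^T)$ the functional takes both values equally on the coset and always yields exactly $2^{\text{co-rk}(A) - 1}$ solutions, independently of $c$.

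Adding the $z = 0$ and $z = 1$ contributions across the four cases carved out by the pair of membership conditions, one reads off $|\ker A(v,w,c)|$ as a power of $2$ and hence obtains $\text{co-rk}(A(v,w,c)) - \text{co-rk}(A) \in \{-1, 0, +1\}$. One checks that the value $+1$ occurs exactly when both memberships hold together with the compatibility on $c$ (case (a)), the value $-1$ occurs exactly when both memberships fail (case (b)), and the value $0$ covers the remaining two cases (case (c)). There is no real obstacle beyond careful bookkeeping; the only conceptual point is the well-definedness of the pairing $\langle A^{-1}(\cdot), \cdot \rangle$ justified in the preliminary discussion, which guarantees that the compatibility in (a) is meaningful precisely in the regime where both memberships hold.
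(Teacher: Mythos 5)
Your kernel-counting argument is correct, complete, and a genuine supplement: the paper's ``proof'' is merely a citation to Gerth, so it offers nothing to compare against. The split on the last coordinate $z$, the observation that $\langle v,\cdot\rangle$ restricted to $\ker A$ is either identically zero (when $v\in\ker(A)^{\perp}=\mathrm{Im}(A^T)$) or balanced, and the coset analysis for $z=1$ are exactly the right ingredients, and the bookkeeping across the four cases is sound.

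However, you should notice that what your computation actually establishes does not match the printed proposition letter for letter. With $A(v,w,c)$ as defined in the paper (new row $v$, new column $w$), the kernel equations are $Ay+zw=0$ and $\langle v,y\rangle+cz=0$, and your analysis shows that the corank rises by $1$ precisely when $w\in\mathrm{Im}(A)$, $v\in\mathrm{Im}(A^T)$ and $c=\langle v,A^{-1}w\rangle$, and drops by $1$ precisely when $w\notin\mathrm{Im}(A)$ and $v\notin\mathrm{Im}(A^T)$. The proposition as printed has $v$ and $w$ swapped in the membership conditions ($v\in\mathrm{Im}(A)$, $w\in\mathrm{Im}(A^T)$, $c=\langle A^{-1}v,w\rangle$). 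These are genuinely different: take $A=\begin{pmatrix}1&1\\0&0\end{pmatrix}$, $v=(1,0)$, $w=(0,1)$, $c=0$. Then $v\in\mathrm{Im}(A)$ but $w\notin\mathrm{Im}(A^T)$, so the printed statement would put this in case $(c)$ (corank unchanged), yet $A(v,w,c)=\begin{pmatrix}1&1&0\\0&0&1\\1&0&0\end{pmatrix}$ is invertible and the corank drops from $1$ to $0$, exactly as your conditions predict. So your proof in fact demonstrates a typo in the statement: the roles of $\mathrm{Im}(A)$ and $\mathrm{Im}(A^T)$ should be interchanged (equivalently, one can keep the statement and replace $A(v,w,c)$ by its transpose). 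This has no effect on the paper's downstream applications in Proposition~\ref{transition as CL}, which only invoke the result for uniformly random $(v,w)$ or for $w\in\{v,v+H_n\}$, where the two versions coincide; but when you write ``case (a)'', ``case (b)'' you are silently identifying your (correct) conditions with the (incorrect) printed ones, and you should call out the discrepancy rather than paper over it.
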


\begin{proof}
This follows from basic linear algebra, see Gerth \cite{Gerth}.
\end{proof}

We write $H_n$ for the vector in $\mathbb{F}_2^{[n]}$ with all entries equal to $1$. 

\begin{proposition} 
\label{transition as CL}
Fix $A$ in $\emph{Mat}_{\mathbb{F}_2}([n] \times [n])$. Denote by $j:=\emph{co-rk}(A)$. Then we have the following. \\
$(1)$ Picking $(v,w,c)$ uniformly at random in $\mathbb{F}_2^{[n]} \times \mathbb{F}_2^{[n]} \times \mathbb{F}_2$ the random variable $\emph{co-rk}(A(v,w,c))$ takes the values $\{j - 1, j, j + 1\}$ with probability given respectively by $Q_{\emph{C.L.}}(j, j - 1)$, $Q_{\emph{C.L.}}(j, j)$ and $Q_{\emph{C.L.}}(j, j + 1)$. \\
$(2)$ Picking $(v,c)$ uniformly at random in $\mathbb{F}_2^{[n]} \times \mathbb{F}_2$, the random variable $\emph{co-rk}(A(v,v,c))$ takes the values $\{j - 1, j, j + 1\}$ with probability given respectively by $Q_{\emph{C.L.}}(j, j - 1)$, $Q_{\emph{C.L.}}(j, j)$ and $Q_{\emph{C.L.}}(j, j + 1)$ if and only if 
$$
\emph{ker}(A) \cap \ker(A^T)=\{0\}.
$$ 
$(3)$ Picking $(v,c)$ uniformly at random in $\mathbb{F}_2^{[n]} \times \mathbb{F}_2$, the random variable $\emph{co-rk}(A(v,v+H_n,c))$ takes the values $\{j - 1, j, j + 1\}$ with probability given respectively by $Q_{\emph{C.L.}}(j, j - 1)$, $Q_{\emph{C.L.}}(j, j)$ and $Q_{\emph{C.L.}}(j, j + 1)$ if 
$$
\emph{ker}(A) \cap \ker(A^T)=\{0\}.
$$
\end{proposition}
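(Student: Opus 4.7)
The plan is to apply Proposition~\ref{rank transition} directly and reduce each claim to counting problems in $\FF_2^{[n]}$, using throughout that for $j := \text{co-rk}(A)$ we have $|\text{Im}(A)| = |\text{Im}(A^T)| = 2^{n-j}$ and the duality identifications $\text{Im}(A) = \ker(A^T)^{\perp}$ and $\text{Im}(A^T) = \ker(A)^{\perp}$.

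For part $(1)$, independence of $v$, $w$, and $c$ gives that the probability that $v \in \text{Im}(A)$, $w \in \text{Im}(A^T)$, and $c = <A^{-1}(v),w>$ equals $2^{-j} \cdot 2^{-j} \cdot 2^{-1} = 2^{-1-2j}$, while the probability that both $v \notin \text{Im}(A)$ and $w \notin \text{Im}(A^T)$ equals $(1 - 2^{-j})^2 = 1 + 2^{-2j} - 2^{1-j}$. These agree with $Q_{\text{C.L.}}(j, j+1)$ and $Q_{\text{C.L.}}(j, j-1)$, and the middle case with $Q_{\text{C.L.}}(j, j)$ follows by complementarity.

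For part $(2)$, set $U := \text{Im}(A) \cap \text{Im}(A^T) = (\ker(A) + \ker(A^T))^{\perp}$. Since $\dim(\ker(A) + \ker(A^T)) = 2j - \dim(\ker(A) \cap \ker(A^T))$, the assumption $\ker(A) \cap \ker(A^T) = \{0\}$ is equivalent to $\dim U = n - 2j$. With $w = v$, Proposition~\ref{rank transition} tells us the corank increases iff $v \in U$ and $c = <A^{-1}(v),v>$, and decreases iff $v \notin \text{Im}(A) \cup \text{Im}(A^T)$. By inclusion-exclusion the latter set has cardinality $2^n - 2 \cdot 2^{n-j} + |U|$. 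When $|U| = 2^{n-2j}$ the resulting probabilities are exactly $2^{-1-2j}$ and $1 + 2^{-2j} - 2^{1-j}$, matching $Q_{\text{C.L.}}$. Conversely, if $\dim U > n - 2j$ then both the up-probability and the down-probability are strictly larger than the $Q_{\text{C.L.}}$ values, which forces the iff statement.

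For part $(3)$, the twist is that the condition $w = v + H_n \in \text{Im}(A^T)$ reads $v \in H_n + \text{Im}(A^T)$, so the relevant set for the upward transition is $\text{Im}(A) \cap (H_n + \text{Im}(A^T))$. Under the hypothesis $\ker(A) \cap \ker(A^T) = \{0\}$, the duality of part $(2)$ gives $\text{Im}(A) + \text{Im}(A^T) = \FF_2^{[n]}$, so we may write $H_n = a + b$ with $a \in \text{Im}(A)$ and $b \in \text{Im}(A^T)$. Then $H_n + \text{Im}(A^T) = a + \text{Im}(A^T)$, and intersecting with $\text{Im}(A)$ yields the coset $a + U$ of cardinality $|U| = 2^{n-2j}$; a parallel inclusion-exclusion handles the downward transition. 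The counting is therefore identical to that in $(2)$. The main obstacle is this coset-nonemptiness step: without the kernel transversality hypothesis the coset $H_n + \text{Im}(A^T)$ could fail to meet $\text{Im}(A)$ at all, which is presumably why $(3)$ is stated as a one-sided implication rather than an equivalence.
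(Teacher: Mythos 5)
Your proof is correct and follows the paper's approach exactly: the paper cites Proposition~\ref{rank transition} together with Remark~\ref{images span everything} and leaves the counting implicit, and your writeup supplies exactly those omitted inclusion--exclusion computations. One small aside: your closing speculation about why $(3)$ is one-sided is not quite the whole story — even when $H_n + \textup{Im}(A^T)$ meets $\textup{Im}(A)$, a nontrivial $\ker(A) \cap \ker(A^T)$ makes $|U| > 2^{n-2j}$ and so skews both tail probabilities, hence the converse to $(3)$ actually holds by the same argument as in $(2)$; the paper merely states the direction it needs.
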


\begin{proof}
This is a simple consequence of Proposition \ref{rank transition} and the next remark.
\end{proof}

\begin{remark} 
\label{images span everything}
We have
\[
\textup{ker}(A) \cap \textup{ker}(A^T) = \{0\} \Longleftrightarrow \textup{Im}(A) + \textup{Im}(A^T) = \mathbb{F}_2^{[n]}.
\]
Indeed, this follows immediately after applying $\perp$ and using that $\textup{Im}(A^T) = \textup{ker}(A)^{\perp}$.
\end{remark}

\subsection{Random matrices}
Let us consider the rule $\mathcal{S}_{\text{mat}}$ defined by
$$
S_i(\text{mat}):=\mathbb{F}_2^{[i]} \times \mathbb{F}_2^{[i]} \times \mathbb{F}_2
$$
and let $X_i(\text{mat}) : \mathcal{S}_{\text{mat}} \rightarrow \{0, \dots, i\}$ be the corresponding sequence of random variables.

\begin{theorem} 
\label{limiting distribution for random matrices}
There exists $C \in \mathbb{R}_{>0}$ and $\rho \in (0,1)$ such that
$$
||\mu_{X_r(\emph{mat})} - \pi_{\emph{C.L.}}||_1 \leq C \cdot \rho^r,
$$
for each $r \in \mathbb{Z}_{\geq 0}$. 
\end{theorem}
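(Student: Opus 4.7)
The plan is to realize $\{X_i(\mathrm{mat})\}_{i \in \mathbb{Z}_{\geq 0}}$ as a bona fide Markov chain on $\mathbb{Z}_{\geq 0}$ with transition matrix $Q_{\text{C.L.}}$, and then invoke the effective ergodic theorem of Section \ref{ergodic for c-transitioning}. The upshot is that this baseline case should fall out as a direct corollary, with no auxiliary failure mechanism to quantify.

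The first step is to note that $X_0(\mathrm{mat})$ is deterministically $0$, since the $0 \times 0$ matrix has corank $0$. Next, by the product structure of $\mathcal{S}_{\mathrm{mat}}$, the $i$-th coordinate $(v, w, c) \in S_i(\mathrm{mat})$ is uniformly distributed on $\mathbb{F}_2^{[i]} \times \mathbb{F}_2^{[i]} \times \mathbb{F}_2$ and independent of the first $i$ factors. Applied to the top-left $i \times i$ minor of $\omega(B)$, Proposition \ref{transition as CL}(1) then asserts that, conditional on the whole past, the distribution of $X_{i+1}(\mathrm{mat})$ is $Q_{\text{C.L.}}(X_i(\mathrm{mat}), \cdot)$, and crucially that this only depends on the value of $X_i(\mathrm{mat})$ and not on the particular matrix realizing that corank. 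This is exactly the Markov property with kernel $Q_{\text{C.L.}}$.

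Once this is established, the sequence $\{X_i(\mathrm{mat})\}$ is $c$-transitioning with $Q_{\text{C.L.}}$ in the trivial sense (taking the auxiliary variables $Z_i$ identically zero), so $c$ may be any fixed element of $(0,1)$. All the structural hypotheses on $Q_{\text{C.L.}}$ required to apply Theorem \ref{ergodic with imperfect info} have already been recorded right after its definition: $2$-driftability (with exceptional states $\{0, 1, 2\}$), $\psi$-irreducibility for $\psi \equiv 1$, and extreme aperiodicity. The theorem therefore produces $C > 0$ and $\rho \in (0,1)$ with $\|\mu_{X_r(\mathrm{mat})} - \pi\|_1 \leq C \rho^r$, where $\pi$ is the unique probability measure satisfying $\pi Q_{\text{C.L.}} = \pi$. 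Since $\pi_{\text{C.L.}} Q_{\text{C.L.}} = \pi_{\text{C.L.}}$ was recorded in the same place, uniqueness forces $\pi = \pi_{\text{C.L.}}$.

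I do not expect a genuine obstacle here: the theorem is a direct synthesis of Proposition \ref{transition as CL}(1) and Theorem \ref{ergodic with imperfect info}, and the rule $\mathcal{S}_{\mathrm{mat}}$ is the cleanest possible. The real work arrives in the subsequent rules (the genuine R\'edei rule and its Pellian variants): there one must identify the non-generic matrices to which parts (2)--(3) of Proposition \ref{transition as CL} do not apply, bound their probability by $c^i$ for some fixed $c \in (0,1)$, and package these exceptions into genuinely non-trivial variables $Z_i$ so that Theorem \ref{ergodic with imperfect info} still applies.
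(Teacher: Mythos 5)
Your proof is correct and follows essentially the same approach as the paper: Proposition \ref{transition as CL}(1) makes $\{X_i(\mathrm{mat})\}$ a genuine Markov chain with kernel $Q_{\text{C.L.}}$ started at $0$, and the effective ergodic machinery finishes the job. The only cosmetic difference is that the paper invokes Theorem \ref{ergodic theorem} directly, whereas you route through Theorem \ref{ergodic with imperfect info} with $Z_i \equiv 0$; since the latter degenerates to the former in that case, the two arguments are interchangeable.
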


\begin{proof}
This follows immediately upon combining part $(1)$ of Proposition \ref{transition as CL} and Theorem \ref{ergodic theorem}.
\end{proof}

In this case there are also explicit formulas available for $\mu_{X_r(\text{mat})}$, which also allow one to deduce Theorem \ref{limiting distribution for random matrices}. We have included this case as the simplest illustration of the methods used here.

\subsection{Alternating matrices}
Let us consider the rule $\mathcal{S}_{\text{alt}}$ defined by
$$
S_i(\text{alt}) := \{(v,w,c) \in \mathbb{F}_2^{[i]} \times \mathbb{F}_2^{[i]} \times \mathbb{F}_2: v + w=H_i\}
$$
and let $X_i(\text{alt}) : \mathcal{S}_{\text{alt}} \rightarrow \{0, \dots, i\}$ be the corresponding sequence of random variables.

\begin{theorem} 
\label{limiting distribution for alternating matrices}
There exists $C \in \mathbb{R}_{>0}$ and $\rho \in (0,1)$ such that
$$
||\mu_{X_r(\emph{alt})} - \pi_{\emph{C.L.}}||_1 \leq C \cdot \rho^r,
$$
for each $r \in \mathbb{Z}_{\geq 0}$.
\end{theorem}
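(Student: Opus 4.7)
The plan is to apply Theorem \ref{ergodic with imperfect info} with $Q = Q_{\text{C.L.}}$ and drift parameter $d = 2$, in exactly the same spirit as the proof of Theorem \ref{limiting distribution for random matrices}. Since $Q_{\text{C.L.}}$ has already been noted to be $\psi$-irreducible, extremely aperiodic, $2$-driftable, and to have stationary distribution $\pi_{\text{C.L.}}$, the entire task reduces to exhibiting $\{X_i(\text{alt})\}_{i}$ as $\alpha$-transitioning with $Q_{\text{C.L.}}$ for some $\alpha \in (0,1)$.

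Proposition \ref{transition as CL}(3) dictates the natural choice of genericity detector: I would set $Z_i(B) = 1$ precisely when $\ker(\omega_i(B)) \cap \ker(\omega_i(B)^T) \neq \{0\}$. By construction of the rule $\mathcal{S}_{\text{alt}}$, conditional on $\omega_i(B)$ the next row $v$ and diagonal entry are drawn independently and uniformly, while the next column is $w = v + H_i$; part (3) of that proposition then gives
\[
\mathbb{P}(X_{i+1}(\text{alt}) = j \mid X_i(\text{alt}) = s, Z_i = 0) = Q_{\text{C.L.}}(s, j)
\]
for all admissible $s, j$, which is exactly the transition requirement of $\alpha$-transitioning.

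The substantive step is the bound $\mathbb{P}(Z_i = 1) \leq \alpha^i$. My first observation is that $\mathcal{S}_{\text{alt}}$ makes $\omega_i$ uniformly distributed on the space of alternating $i \times i$ matrices, i.e.\ matrices $A$ with $A(k,l) + A(l,k) = 1$ for all $k \neq l$, by a direct count of degrees of freedom at each stage. For such $A$ we have $A + A^T = J - I$ with $J$ the all-ones matrix, and an elementary computation identifies $\ker(A + A^T)$ as $\{0\}$ when $i$ is even and as $\{0, H_i\}$ when $i$ is odd. Since $\ker(A) \cap \ker(A^T) \subseteq \ker(A + A^T)$, the event $\{Z_i = 1\}$ is empty for even $i$, and for odd $i$ it reduces to the simultaneous vanishing of all row sums of $A$ (the column sum condition being automatic from the parity of $i - 1$). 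Conditional on the off-diagonal entries, each row-sum vanishing is an independent constraint on a uniform, independent diagonal entry, so $\mathbb{P}(Z_i = 1) \leq 2^{-i}$. Taking $\alpha = 1/2$ in Theorem \ref{ergodic with imperfect info} then delivers the claimed exponential decay. The most delicate point, which I would isolate as a short lemma, is the linear-algebraic identification of $\ker(A + A^T)$ with $\langle H_i \rangle$ for odd $i$ together with the parity interplay that forces row and column sum conditions to coincide; once this is in place, the genericity estimate and hence the whole argument are essentially automatic.
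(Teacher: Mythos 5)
Your proposal is correct and follows the same strategy as the paper: apply Theorem \ref{ergodic with imperfect info} with $Q = Q_{\text{C.L.}}$, using Proposition \ref{transition as CL}(3) to verify the $c$-transitioning property. The only variation is in the choice of detector $Z_i$. The paper simply sets $Z_i(B) = 1$ iff $\omega_i(B)H_i = 0$, a single linear condition that immediately gives $\mathbb{P}(Z_i = 1) = 2^{-i}$ and which, by the observation that for an alternating $A$ any nonzero vector in $\ker(A)\cap\ker(A^T)$ must lie in $\ker(A+A^T)=\ker(J-I)\subseteq\{0,H_i\}$, suffices to ensure the generic transition when $Z_i = 0$. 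Your $Z_i$ is instead the exact failure set $\{\ker(\omega_i(B))\cap\ker(\omega_i(B)^T)\neq\{0\}\}$, which requires the same parity lemma on $\ker(J-I)$ as well as the row-sum/column-sum reconciliation for odd $i$, and in return yields the marginally stronger conclusion that $Z_i = 1$ is impossible for even $i$. Since both routes deliver $\mathbb{P}(Z_i = 1)\leq 2^{-i}$ and then invoke Theorem \ref{ergodic with imperfect info} identically, there is no real gain; the paper's slightly coarser $Z_i$ sidesteps the parity case split while ultimately resting on the same linear algebra you isolate as your ``short lemma.''
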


\begin{proof}
For each integer $i \geq 0$ let us define
$$
Z_i(\text{alt}): \mathcal{S}_{\text{alt}} \to \{0,1\}
$$
to be the assignment $B \mapsto 0$ in case $\omega_i(B) \cdot H_i \neq 0$ and $B \mapsto 1$ otherwise. Clearly
$$
\mathbb{P}(Z_i(\text{alt}) = 1) = \frac{1}{2^i}.
$$
Furthermore, by Proposition \ref{transition as CL} part $(3)$ and Remark \ref{images span everything} we deduce that for each $j \in \{0, \dots, i + 1\}$ and $k \in \{0, \dots, i\}$ we have that
$$
\mathbb{P}(X_{i + 1}(\text{alt}) = j | X_i(\text{alt}) = k, Z_i = 0) = Q_{\text{C.L.}}(k, j). 
$$
Hence the desired conclusion falls as an immediate consequence of Theorem \ref{ergodic with imperfect info}.
\end{proof}

\subsection{R\'edei matrices} 
\label{Redei matrices}
Let us consider the rule $\mathcal{S}_{\text{R\'edei}}(\kappa)$ defined by
$$
S_i(\text{R\'edei})(\kappa):=\{(v,w,c) \in \mathbb{F}_2^{[i]} \times \mathbb{F}_2^{[i]} \times \mathbb{F}_2: v+w=H_i \},
$$
if $i < \kappa$ and 
$$ 
S_i(\text{R\'edei})(\kappa):=\{(v,w,c) \in \mathbb{F}_2^{[i]} \times \mathbb{F}_2^{[i]} \times \mathbb{F}_2: v=w \},
$$
if $i \geq \kappa$. This gives a sequence of random variables
$$
X_i(\text{R\'edei}, \kappa): \mathcal{S}_{\text{R\'edei}}(\kappa) \to \{0, \dots, i\},
$$
given by $B \mapsto \text{co-rk}(\omega_i(B))$. 

Let now $r$ be in $\mathbb{Z}_{\geq 0}$. In arithmetic applications one considers the space of R\'edei matrices where firstly $\kappa$ is chosen with probability $\text{Binom}(r, \kappa):=2^{-r}$ ${r} \choose {\kappa}$ and then the matrix is chosen as $\omega_r(B)$ with $B \in \mathcal{S}_{\text{R\'edei}}(\kappa)$ chosen randomly. The resulting probability distribution is
$$
\mu_r(\text{R\'edei}):=\sum_{\kappa=0}^{r}\text{Binom}(r, \kappa) \cdot \mu_{X_{r}(\text{R\'edei}, \kappa)}.
$$
We now prove the following.

\begin{theorem} 
\label{limiting distribution for redei matrices}
There exists $C \in \mathbb{R}_{>0}$ and $\rho \in (0,1)$ such that 
$$
||\mu_r(\emph{R\'edei}) - \pi_{\emph{C.L.}}||_1 \leq C \cdot \rho^r.
$$
\end{theorem}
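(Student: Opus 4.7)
The plan is to apply Theorem \ref{ergodic with imperfect info} after reinterpreting $\mu_r(\emph{R\'edei})$ as the output of a single $c$-transitioning process. Using invariance of the corank under simultaneous row-column permutations, one sees that the R\'edei construction (in which the alternating block is forced to occupy $\{1, \ldots, \kappa\}$ and $\kappa \sim \text{Binomial}(r, 1/2)$) has the same corank distribution as the following: label each index $i \in \{1, \ldots, r\}$ independently $L_i \in \{A, S\}$ with probability $1/2$, then impose $A(p, q) + A(q, p) = 1$ precisely on pairs with $L_p = L_q = A$. Build the matrix one index at a time. At stage $i$, draw $L_{i+1} \in \{A, S\}$, $v \in \mathbb{F}_2^{[i]}$, and $c \in \mathbb{F}_2$ independently uniformly, and form $A(v, v + u, c)$ with $u = e_K$ (the indicator vector in $\mathbb{F}_2^{[i]}$ of the current $A$-block $K := \{j \leq i : L_j = A\}$) if $L_{i+1} = A$, and $u = 0$ if $L_{i+1} = S$. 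Letting $X_i$ be the corank at stage $i$, $X_r$ is distributed as $\mu_r(\emph{R\'edei})$.

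A small extension of Proposition \ref{transition as CL} part $(2)$ handles both labels in one stroke: for any fixed $u \in \mathbb{F}_2^{[i]}$ and uniform $(v, c)$, the corank of $A(v, v + u, c)$ transitions according to $Q_{\emph{C.L.}}$ whenever $\ker(A) \cap \ker(A^T) = \{0\}$. The proof copies that of part $(2)$, replacing $g(v) = 0$ by $g(v) = g(u)$ in the independence argument following Remark \ref{images span everything}. Setting $Z_i = 1$ iff $\ker(\omega_i(B)) \cap \ker(\omega_i(B)^T) \neq \{0\}$ thus enforces the correct conditional transition regardless of which label is drawn.

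The central estimate is $\mathbb{P}(Z_i = 1) \leq C \cdot (3/4)^i$. Conditioning on $k_i = k$ where $k_i \sim \text{Binomial}(i, 1/2)$ counts the $A$-labels among the first $i$ indices, the matrix decomposes (after reindexing the $A$-labeled coordinates to come first) as
\[
\omega_i(B) = \begin{pmatrix} A_{11} & A_{12} \\ A_{12}^T & A_{22} \end{pmatrix},
\]
with $A_{11}(p,q) + A_{11}(q,p) = 1$ for $p \neq q$, $A_{22} = A_{22}^T$, and $A_{11}, A_{12}, A_{22}$ mutually independent and uniform in their respective spaces. Any $x \in \ker(A) \cap \ker(A^T)$ satisfies $(A - A^T) x = 0$, which forces its top-$k$ component into $\{0\}$ if $k$ is even and into $\{0, H_k\}$ if $k$ is odd. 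A union bound over these at most two options and over $x_2 \in \mathbb{F}_2^{[i-k]}$, together with the facts that $A_{12} x_2$ is uniform in $\mathbb{F}_2^{[k]}$ and $A_{22} x_2$ is uniform in $\mathbb{F}_2^{[i-k]}$ for $x_2 \neq 0$ (the latter by the change-of-basis $M \mapsto P^{-T} M P^{-1}$ on random $\mathbb{F}_2$-symmetric matrices), yields $\mathbb{P}(Z_i = 1 \mid k_i = k) \leq C' \cdot 2^{-k}$. Averaging over $k$: $\mathbb{P}(Z_i = 1) \leq C' \sum_k \binom{i}{k} 2^{-i-k} = C' \cdot (3/4)^i$. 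Hence $\{X_i\}$ is $c$-transitioning with $Q_{\emph{C.L.}}$, and Theorem \ref{ergodic with imperfect info} delivers the bound.

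The obstacle to note is that applying Theorem \ref{ergodic with imperfect info} to each $\kappa$-fixed R\'edei process separately does not work: in the symmetric phase of such a process, the bad-event probability is only $O(2^{-\kappa})$, independent of $i$, which is not of the form $c^i$. The reformulation above absorbs the binomial averaging over $\kappa$ into the process itself, converting the $\kappa$-dependent multiplicative bound into the required geometric-in-$i$ decay $(3/4)^i$.
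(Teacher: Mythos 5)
Your proof is correct but takes a genuinely different route from the paper's. The paper fixes $\kappa$, uses Hoeffding's inequality to discard $\kappa \notin [\epsilon r,(1-\epsilon)r]$, and applies Theorem \ref{ergodic with imperfect info} to each $X_\bullet(\text{R\'edei},\kappa)$ separately, with a detector $Z_i(\kappa)$ built from a sufficient (not tautological) condition: the last $i-\kappa$ columns of $\omega_i(B)$ are independent and no vector projecting to $H_\kappa$ lies in $\ker\omega_i(B)$. The resulting bound $\mathbb{P}(Z_i(\kappa)=1)\leq c_\epsilon^{\,i}$ is only shown for $i\leq r$, which suffices by Remark \ref{transitions at finite level}. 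You instead randomize the alternating/symmetric labels $L_i$ independently, converting the binomial mixture over $\kappa$ into a single $c$-transitioning process, and take $Z_i$ to be exactly the failure of $\ker(\omega_i(B))\cap\ker(\omega_i(B)^T)=\{0\}$; the block decomposition plus the observation that $\omega_i(B)-\omega_i(B)^T$ confines the top $k$-component of any common kernel vector to $\{0,H_k\}$ gives the clean $O((3/4)^i)$ bound. Your route dispenses with both the Hoeffding step and Remark \ref{transitions at finite level}, at the small cost of needing the extension of Proposition \ref{transition as CL}(2) to $A(v,v+u,c)$ for arbitrary $u$, which indeed goes through: when $\textup{Im}(A)+\textup{Im}(A^T)=\mathbb{F}_2^{[n]}$, the coset $u+\textup{Im}(A^T)$ meets $\textup{Im}(A)$ in a nonempty set of size $2^{n-2j}$, and the inclusion--exclusion count reproducing $Q_{\text{C.L.}}$ is unchanged. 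One small caveat: your closing ``obstacle'' paragraph slightly overstates the case against the per-$\kappa$ approach. It \emph{does} work in the paper, precisely because Hoeffding restricts to $\kappa\geq\epsilon r$ and the estimate is only needed for $i\leq r$, where $2^{-\kappa}\leq 2^{-\epsilon i}$. The obstacle is real only if one insists on a $\kappa$-uniform geometric bound for all $i$; your absorption of the binomial average into a single labeled process is arguably the cleaner way around it.
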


\begin{proof}
Fix $\epsilon \in (0, \frac{1}{2})$. Then we have
\begin{multline*}
||\mu_r(\text{R\'edei}) - \pi_{\text{C.L.}}||_1 \leq \sum_{\epsilon \cdot r \leq \kappa \leq (1 - \epsilon) \cdot r} \text{Binom}(r, \kappa) \cdot ||\mu_{X_{r}(\text{R\'edei}, \kappa)} - \pi_{\text{C.L.}}||_1+ \\
\sum_{0 \leq \kappa < \epsilon \cdot r} 2 \cdot |\text{Binom}(r, \kappa)| + \sum_{(1 - \epsilon) \cdot r < \kappa \leq r} 2 \cdot |\text{Binom}(r, \kappa)|.
\end{multline*}
Thanks to Hoeffiding's inequality, the contribution from the last two summands is no more than
$$
2 \cdot \exp\left(-2 \cdot \left(\frac{1}{2} - \epsilon\right)^2 \cdot r\right),
$$
which is certainly within the bound. Hence it is enough to show that there are $C' \in \mathbb{R}_{>0}$, $\rho' \in (0,1)$ such that
$$
||\mu_{X_{r}(\text{R\'edei}, \kappa)} - \pi_{\text{C.L.}}||_1 \leq C' \rho'^r,
$$
for all $\kappa \in [\epsilon \cdot r, (1 - \epsilon) \cdot r]$.

Therefore we now focus on showing the existence of such $C'$ and $\rho'$. To this end we start by defining for each $\kappa \in \mathbb{Z}_{\geq 1}$ the following sequence of random variables
$$
Z_i(\kappa): \mathcal{S}_{\text{R\'edei}}(\kappa) \to \{0,1\}.
$$
Let $B$ be in $\mathcal{S}_{\text{R\'edei}}(\kappa)$. If $i \leq \kappa$ we put $Z_i(\kappa)(B) = 0$ in case $\omega_i(B) \cdot H_i \neq 0$ and we put $Z_i(\kappa)(B) = 1$ otherwise. Instead for $i > \kappa$ we put $Z_i(\kappa)(B) = 0$ in case the last $i - \kappa$ columns of $\omega_i(B)$ are linearly independent and the following additional condition is satisfied. We ask that for each vector $x \in \mathbb{F}_2^{[i]}$ with $\pi_{[\kappa]}(x) = H_{\kappa}$ we have that $\omega_i(B) x \neq 0$.  

We claim that $Z_i(\kappa)(B) = 0$ implies that $\text{Im}(\omega_i(B)) + \text{Im}(\omega_i(B)^T) = \mathbb{F}_2^{[i]}$. Put $q := \text{min}(i, \kappa)$. Then we always have the inclusion
\[
\text{Im}(\omega_i(B) + \omega_i(B)^T) \supseteq \{(a_1, \dots, a_i) \in \FF_2^{[i]} : a_1 + \dots + a_q = 0, a_j = 0 \text{ for } j > q\}.
\]
But since the last $i - \kappa$ columns are linearly independent, it follows that the projection map on the last $i - \kappa$ coordinates remains surjective when restricted to $\text{Im}(\omega_i(B)) + \text{Im}(\omega_i(B)^T)$. Hence $\text{Im}(\omega_i(B)) + \text{Im}(\omega_i(B)^T) \neq \mathbb{F}_2^{[i]}$ implies that
\[
\text{Im}(\omega_i(B)) + \text{Im}(\omega_i(B)^T) = \{(a_1, \dots, a_i) \in \FF_2^{[i]} : a_1 + \dots + a_q + x_{q + 1}a_{q+1} + \dots + x_ia_i = 0\}
\]
for some $x_{q + 1}, \dots, x_i \in \FF_2$. After applying $\perp$ and Remark \ref{images span everything}, we see that this is excluded by our assumptions on $H_i$ and $H_\kappa$, which establishes our claim. Hence parts (2) and (3) of Proposition \ref{transition as CL} give
$$
\mathbb{P}(X_{i + 1}(\text{R\'edei}, \kappa) = j | X_i(\text{R\'edei}, \kappa) = k, Z_i(\kappa) = 0) = Q_{\text{C.L.}}(k, j)
$$
for each $k \in \{0, \dots, i\}$ and $j \in \{0, \dots, i + 1\}$. 

Let us now bound $\mathbb{P}(Z_i(\kappa) = 1)$. In case $i \leq \kappa$ then we clearly have that this probability equals $\frac{1}{2^i}$. For $i>\kappa$ we use the union bound, applied to the $2^{i - \kappa}$ candidate vectors $x$ each happening with probability at most $\frac{1}{2^i}$, to deduce that
$$
\mathbb{P}(Z_i(\kappa) = 1) \leq \frac{1}{2^{\kappa}} + \left(1 - \prod_{j = r - \kappa + 1}^r \left(1 - \frac{1}{2^j}\right)\right).
$$ 
For $i \leq r$ and $\kappa \in [\epsilon \cdot r,(1 - \epsilon) \cdot r]$ we can bound the two summands as follows. The first summand is smaller than $\frac{1}{2^{\epsilon \cdot r}} \leq \frac{1}{2^{\epsilon \cdot i}} $. The second summand is no more than 
\[
1 - \prod_{j = r - \kappa + 1}^r \left(1 - \frac{1}{2^j}\right) \leq 1 - \left(1 - \frac{1}{2^{\epsilon r+1}}\right)^{(1 - \epsilon)r} \leq 1 - \left(\frac{1}{4}\right)^{\frac{(1 - \epsilon)r}{2^{\epsilon r+1}}}.
\]
This last expression can be bounded as $c_{\epsilon}^r \leq c_{\epsilon}^i$, for a constant $c_{\epsilon} \in (0,1)$ depending only on $\epsilon$. Keeping in mind Remark \ref{transitions at finite level} we invoke Theorem \ref{ergodic with imperfect info} and obtain precisely the desired uniform upper bound. 
\end{proof}

\subsection{R\'edei matrices in Pellian families} 
\label{Pellian families} 
We now examine R\'edei matrices that occur in the study of the solubility of the following \emph{Pellian} equations. Fix $l$ an integer such that $|l|$ is a prime congruent to $3$ modulo $4$. One then looks at the solubility of
\begin{align}
\label{ePellian}
x^2 - dy^2 = l
\end{align}
with $x,y \in \mathbb{Z}$ as $d$ varies among squarefree positive integers with $l \mid d$. 

Certainly for equation (\ref{ePellian}) to be soluble, there needs to be a solution with $x, y \in \mathbb{Q}$. Here we will study only solubility with $x, y \in \mathbb{Q}$, the transition to $\mathbb{Z}$ is made in upcoming work of the authors. As we explain below the R\'edei matrix attached to $d$ is more constrained than those appearing in Section \ref{Redei matrices}. We divide the discussion according to
$$
\text{sgn}(l), \gcd(2, \Delta_{\mathbb{Q}(\sqrt{d})/\mathbb{Q}})
$$ 
and explain for each possibility which type of matrices occur and parametrize (in a rank preserving manner) each space with a rule. Finally for the corresponding random variables we prove the analogue of Theorem \ref{limiting distribution for redei matrices} in each of these cases. 

For nonnegative integers $\kappa \leq s$, we denote by $H_s(\kappa)$ the vector of $\mathbb{F}_2^{[s]}$ whose first $\kappa$ entries are ones and the remaining entries are zeroes. Before we proceed, we shall define the R\'edei matrix attached to a squarefree integer $d$.

\begin{definition}
Let $d$ be a squarefree integer and let $D$ be the discriminant of $\Q(\sqrt{d})$. Write $q_1, \dots, q_t$ for the prime divisors of $D$. Then we can uniquely decompose
\[
\chi_d = \sum_{i = 1}^t \chi_i,
\]
where $\chi_i$ is a character with conductor a power of $q_i$. if $q_i$ is an odd prime, we have that $\chi_i$ is the quadratic character of $\Q(\sqrt{q_i^\ast})$, where $q_i^\ast$ is the unique integer satisfying $|q_i^\ast| = q_i$ and $q_i^\ast \equiv 1 \bmod 4$. If instead $q_i = 2$, we have that $\chi_i$ is the quadratic character of $\Q(\sqrt{-2})$, $\Q(\sqrt{-1})$ or $\Q(\sqrt{2})$. Then the R\'edei matrix $\textup{R\'edei}(d)$ is a $t \times t$ matrix with entries
\[
\textup{R\'edei}(d)(i, j) = \chi_j(\textup{Frob } p_i) \text{ if } i \neq j.
\]
The diagonal entries are determined by the rule that the sum of every row is zero.
\end{definition}

\begin{remark}
\label{rEGerth}
In case we fix the number of prime divisors of the discriminant, it is a fact that almost all discriminants are odd. Furthermore, in case that $d < 0$ and $d \equiv 1 \bmod 4$, we know that also the sum of every column is zero. Then removing a random row and the corresponding column from the R\'edei matrix gives a matrix satisfying the constraints as described in Subsection \ref{Redei matrices}: this follows from quadratic reciprocity. Gerth \cite{Gerth} proves equidistribution in this space of matrices and proves convergence to $\pi_{\textup{C.L.}}$ as the number of prime divisors goes to infinity. With these remarks we directly see that Theorem \ref{limiting distribution for redei matrices} is an effective version of Gerth's result. However, if we consider all squarefree integers simultaneously, one also needs to consider even discriminants.
\end{remark}

\subsection{Auxiliary matrix spaces} 
\label{Auxiliary}
In this subsection we define several matrix spaces. In the remaining paragraphs of this subsection we motivate these definitions by showing that the R\'edei matrix of $d$, such that equation (\ref{ePellian}) is soluble over $\mathbb{Q}$, naturally gives a point in one of these spaces. The matrix space depends on the value of $l$ and the parity of the discriminant of $\Q(\sqrt{d})$. Let $s$ be a positive integer and $\kappa \leq s$ be a nonnegative integer. 

We let
$$
\text{Pell}_1(s, \kappa)
$$
be the space of $(s + 1) \times (s + 1)$ matrices $A$ with coefficients in $\mathbb{F}_2$ satisfying the following constraints. The first row of $A$ must be $0$ and the sum of all the columns of $A$ equals $0$. Furthermore, for $1 \leq i < j \leq \kappa + 1$ we demand that
$$
A(i, j) = A(j ,i) + 1,
$$
while for $\kappa + 1 < j \leq s + 1$ and $1 \leq i \leq s + 1$ we demand that
$$
A(i, j) =A(j, i).
$$

We next put 
$$
\text{Pell}_2(s ,\kappa),
$$
to be the space of $(s + 1) \times (s + 1)$ matrices $A$ with coefficients in $\mathbb{F}_2$ with the following constraints. The first row of $A$ must be $H_{s + 1}(\kappa + 1)$ and the sum of all the columns of $A$ equals $0$. Finally, we ask for $1 \leq i < j \leq \kappa + 1$ that
$$
A(i, j) = A(j, i) + 1,
$$
while we ask for $\kappa + 1 < j \leq s + 1$ and $1 \leq i \leq s + 1$ that
$$
A(i, j) = A(j, i).
$$

We set
$$
\text{Pell}'_1(s, \kappa)
$$
to be the space of $(s + 2) \times (s + 2)$ matrices $A$ with coefficients in $\mathbb{F}_2$ satisfying the following constraints. The first row of $A$ is $0$ and the sum of all the columns is $0$. For $1 \leq i < j \leq \kappa + 2$, with $i, j \neq 2$ we require that
$$
A(i, j) = A(j, i) + 1
$$
and
$$
A(i, 2) = A(2, i) + \kappa + 1,
$$
while we require for $\kappa + 2 < j \leq s + 2$ and $1 \leq i \leq s + 2$ that
$$
A(i, j) = A(j, i).
$$

Let now $(a, b)$ be in $\mathbb{F}_2^2$. Finally, we put
$$
\text{Pell}_3(s, \kappa, (a, b))
$$
to be the space of $(s + 2) \times (s + 2)$ matrices $A$ with coefficients in $\mathbb{F}_2$ satisfying the following constraints. The first row of $A$ equals $H_{s + 2}(\kappa + 2)$. The projection on the first two entries of the second row equals $(a, b)$. The projection on the last $s$ entries of the second column equals $H_{s}(\kappa)$. For each $1 \leq i < j \leq \kappa + 2$ and $i, j \neq 2$ we have that
$$
A(i, j) = A(j, i) + 1.
$$
If instead $\kappa + 2 < j \leq s + 2$ and $1 \leq i \leq s + 2$, we have that
$$
A(i, j) = A(j, i).
$$

\subsubsection{Positive $l$, odd discriminant}
Enumerate the odd prime divisors of $d$ different from $l$ as $q_1, \dots, q_s$ such that precisely the first $\kappa$ of the $q_i$ are congruent $3$ modulo $4$. Represent the R\'edei matrix, $\text{R\'edei}(d)$, with the prime $l$ being the first row and the character $\chi_{-l}$ being the first column. The remaining $s$ rows and columns are numbered precisely as the $q_i$. Later, we shall also have to deal with even discriminants, in which case we always put the prime $2$ in the second row and second column. With this convention the equation
$$
x^2 -dy^2 = lz^2
$$ 
is soluble over $\mathbb{Q}$ if and only if the first row of $\text{R\'edei}(d)$ is $0$. The sum of all the columns will be zero as this is true for any R\'edei matrix. Keeping in mind quadratic reciprocity we conclude that
$$
\text{R\'edei}(d) \in \text{Pell}_1(s, \kappa).
$$

\subsubsection{Negative $l$, odd discriminant}
Maintain the notation as in the previous subsection for $d, l, s, \kappa, q_1, \dots, q_s$. Now the solubility of equation (\ref{ePellian}) over $\mathbb{Q}$ becomes equivalent to the first row being $H_{s + 1}(\kappa + 1)$. Invoking quadratic reciprocity once more, we conclude that
$$
\text{R\'edei}(d) \in \text{Pell}_2(s, \kappa).
$$

\subsubsection{Positive $l$, even discriminant}
The only possibility here is that $d$ is even, otherwise equation (\ref{ePellian}) does not have solutions in $\mathbb{Q}_2$. Examining the Legendre symbols as before we conclude that
$$
\text{R\'edei}(d) \in \text{Pell}'_1(s, \kappa).
$$

\subsubsection{Negative $l$, even discriminant}
We distinguish two cases here: $d \equiv 0 \bmod 2$ or $d \equiv 3 \bmod 4$. In the first case we have
$$
\text{R\'edei}(d) \in \text{Pell}_2(s, \kappa).
$$
Instead, in the second case, we have
$$ 
\text{R\'edei}(d) \in \text{Pell}_3(s, \kappa, (a, b)),
$$
where $a = \frac{l - 1}{4}, b = \frac{d + 1}{4}$.

\subsection{Reduction to a rule}
In this section we transform, in a rank-preserving manner, each of the spaces given in Section \ref{Auxiliary} into a rule.

\subsubsection{About $\text{Pell}_1(s, \kappa)$}
Let $\kappa \leq s$ be positive integers with $\kappa$ odd. Let $A$ be in $\text{Pell}_1(s, \kappa)$. 
Erasing the first column and row one obtains a matrix $\widetilde{A}$ whose corank equals $\text{co-rk}(A)-1$. This matrix satisfies the further constraint 
$$
\widetilde{A} H_s = \widetilde{A}^T H_s = H_s(\kappa).
$$

Observe that the second equation follows from the first since $\kappa$ is odd. 

Adding the last $s - 1$ columns to the first column and then adding the last $s - 1$ rows to the first row, we see that the space of such matrices is in a rank-preserving bijection with the space of $s \times s$ matrices whose first column and first row are both equal to $H_s(\kappa)$, while the bottom right minor equals a point of the form $\omega_{s - 1}(B)$ with $B \in \mathcal{S}_{\text{R\'edei}}(\kappa-1)$. Observe that this now makes sense also for $\kappa$ even. In other words this corresponds to the rule
$$
\mathcal{S}_{\text{Pell},1}(\kappa) := \prod_{i \in \mathbb{Z}_{\geq 0}} S_i(\text{Pell}_1, \kappa),
$$
where for $1 \leq i < \kappa$ we have that
$$
S_i(\text{Pell}_1, \kappa):=\{(v,w,c) \in \mathbb{F}_2^{[i]} \times \mathbb{F}_2^{[i]} \times \mathbb{F}_2: v+w=(0,H_{i-1}), \pi_1(w)=1\},
$$
while for $i \geq \kappa$ we have that
$$
S_i(\text{Pell}_1, \kappa):=\{(v,w,c) \in \mathbb{F}_2^{[i]} \times \mathbb{F}_2^{[i]} \times \mathbb{F}_2: v=w, \pi_1(v)=0\},
$$

and finally $S_0(\text{Pell}_1, \kappa) = \{1\}$.

To each $B \in \mathcal{S}_{\text{Pell},1}(\kappa)$ corresponds a sequence of matrices $\omega_i(B) \in \text{Mat}_{\mathbb{F}_2}([i] \times [i])$. We define a corresponding sequence of random variables
$$
X_{i}(\text{Pell}_1, \kappa): \mathcal{S}_{\text{Pell},1}(\kappa) \to \{0, \dots, i\}
$$
given by the assignment $B \mapsto \text{co-rk}(\omega_i(B))$. 

As in Subsection \ref{Redei matrices}, we put
$$
\mu_r(\text{Pell}_1) := \sum_{\kappa = 0}^r \text{Binom}(r, \kappa) \cdot \mu_{X_{r}(\text{Pell}_1, \kappa)},
$$
for each integer $r \geq 0$.

\subsubsection{About $\text{Pell}_2(s, \kappa)$}
Let $\kappa \leq s$ be positive integers with $\kappa$ even. Let $A$ be in $\text{Pell}_2(s, \kappa)$. We see that we can eliminate the second column and row of $A$ to obtain a matrix whose co-rank equals $\text{co-rk}(A) - 1$. This is a $s \times s$ matrix whose first column is $e_1$, first row is $H_{s}(\kappa)$ and whose bottom right minor is a matrix arising as $\omega_{s - 1}(B)$ with $B \in \mathcal{S}_{\text{R\'edei}}(\kappa-1)$. This corresponds to the rule

$$
\mathcal{S}_{\text{Pell}, 2}(\kappa) := \prod_{i \in \mathbb{Z}_{\geq 0}} S_i(\text{Pell}_2, \kappa),
$$
where for $1 \leq i \leq \kappa$ we have that
$$
S_i(\text{Pell}_2, \kappa):=\{(v,w,c) \in \mathbb{F}_2^{[i]} \times \mathbb{F}_2^{[i]} \times \mathbb{F}_2: v+w=H_{i}, \pi_1(w)=1\},
$$
while for $i > \kappa$ we have that
$$
S_i(\text{Pell}_2, \kappa):=\{(v,w,c) \in \mathbb{F}_2^{[i]} \times \mathbb{F}_2^{[i]} \times \mathbb{F}_2: v=w, \pi_1(v)=0\},
$$

and finally $S_0(\text{Pell}_2, \kappa) = \{1\}$. As before we let
$$
X_i(\text{Pell}_2, \kappa): \mathcal{S}_{\text{Pell}, 2}(\kappa) \to \{0, \dots, i\}
$$
be the sequence of random variables given by $B \mapsto \text{co-rk}(\omega_i(B))$. Similarly, we define for every integer $r \geq 0$
$$
\mu_r(\text{Pell}_2) := \sum_{\kappa = 0}^r \text{Binom}(r, \kappa) \cdot \mu_{X_{r}(\text{Pell}_2, \kappa)}.
$$

\subsubsection{About $\text{Pell}'_1(s, \kappa)$}
By throwing away the first row and the second column, and then adding up all the other rows to the second, we get again the rule $ \mathcal{S}_{\text{Pell},1}(\kappa)$ and random variables $X_{r}(\text{Pell}_1, \kappa)$. Hence this case does not give any new sequence of random variables.

\subsubsection{About $\text{Pell}_3(s, \kappa, (a, b))$}
Arguing as above one can reduce to the rules
$$
\mathcal{S}_{\text{Pell}, 3}(\kappa, (a, b)) := \prod_{i \in \mathbb{Z}_{\geq 0}}S_i(\text{Pell}_3, (a, b), \kappa),
$$
defined for each $a,b \in \mathbb{F}_2$ in the following manner. We put $S_0(\text{Pell}_3, (a, b), \kappa)=\{1\}$, $S_1 := \{(1, a, b)\}$. For $i \geq 2$ we put
$$
S_i(\text{Pell}_3, (a, b), \kappa):=\{(v,w,c): \pi_1(v)=0, \pi_1(w)=1, \pi_2(v)=1, \pi_j(v)+\pi_j(w)=1 \ \text{for} \ 3 \leq j\},
$$
for $i \leq \kappa$ and
$$
S_i(\text{Pell}_3, (a, b), \kappa):=\{(v,w,c): \pi_1(w)=1, \pi_1(v)=0, \pi_2(v)=0, \pi_j(v)+\pi_j(w)=0 \ \text{for} \ 3 \leq j\},
$$ 
for $i > \kappa$. As before we put

$$
X_i(\text{Pell}_3, (a, b), \kappa): \mathcal{S}_{\text{Pell}, 3}(\kappa, (a, b)) \to \{0, \dots, i\},
$$
given by the assignment $B \mapsto \text{co-rk}(\omega_i(B))$ as $B$ varies in $\mathcal{S}_{\text{Pell}, 3}(\kappa, (a, b))$.

Similarly, we define
$$
\mu_r(\text{Pell}_3, (a, b)) := \sum_{\kappa = 0}^{r-1} \text{Binom}(r-1, \kappa) \cdot \mu_{X_r(\text{Pell}_3, (a, b), \kappa)}
$$
for each integer $r \geq 0$ and $a, b \in \mathbb{F}_2$. 

\subsubsection{Effective convergence in the Pellian families}
We now state and prove our final result.

\begin{theorem} 
\label{limiting distribution for Pellian families}
There are $C \in \mathbb{R}_{>0}$ and $\rho \in (0,1)$ such that the following two statements hold. \\
$(1)$ We have for all integers $r \geq 0$ and for all $j \in \{1,2\}$
$$
||\mu_r(\emph{Pell}_j) - \pi_{\emph{C.L.}}||_1 \leq C \cdot \rho^r.
$$
$(2)$ We have for all integers $r \geq 0$ and for all $a,b \in \mathbb{F}_2$
$$
||\mu_r(\emph{Pell}_3, (a, b)) - \pi_{\emph{C.L.}}||_1 \leq C \cdot \rho^r.
$$
\end{theorem}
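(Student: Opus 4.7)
The plan is to apply Theorem \ref{ergodic with imperfect info} separately to each of the rules $\mathcal{S}_{\text{Pell}, 1}(\kappa)$, $\mathcal{S}_{\text{Pell}, 2}(\kappa)$ and $\mathcal{S}_{\text{Pell}, 3}(\kappa, (a, b))$, in close parallel with the proof of Theorem \ref{limiting distribution for redei matrices}. First I would use Hoeffding's inequality exactly as in that proof to truncate the sum over $\kappa$ to the central range $[\epsilon r, (1-\epsilon) r]$, reducing each of statements (1) and (2) to a uniform exponential bound of the form $||\mu_{X_r(\text{Pell}_j, \kappa)} - \pi_{\text{C.L.}}||_1 \leq C' \rho'^r$ (and the analogue for $\text{Pell}_3$) valid for every such $\kappa$.

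Next, for each rule I would define a detector sequence $Z_i$ along the same lines as in the R\'edei proof. For $i \leq \kappa$, put $Z_i = 1$ precisely when $\omega_i(B)$ annihilates the pinned vector dictated by the rule (a mild variant of $H_i$ adjusted for the fact that the first row/column, and in the $\text{Pell}_3$ case also the second row/column, are prescribed). For $i > \kappa$, put $Z_i = 1$ unless the last $i - \kappa$ columns of $\omega_i(B)$ are $\mathbb{F}_2$-linearly independent and $\omega_i(B) x \neq 0$ for every $x \in \mathbb{F}_2^{[i]}$ whose projection on the first $\kappa$ coordinates matches the prescribed pinned pattern. As in the proof of Theorem \ref{limiting distribution for redei matrices}, this forces $\text{Im}(\omega_i(B)) + \text{Im}(\omega_i(B)^T) = \mathbb{F}_2^{[i]}$ on the event $\{Z_i = 0\}$, so Proposition \ref{transition as CL} parts (2) and (3) apply and yield $\mathbb{P}(X_{i+1} = j \mid X_i = k, Z_i = 0) = Q_{\text{C.L.}}(k, j)$. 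The union bound used in the R\'edei proof gives $\mathbb{P}(Z_i = 1) = O(c_\epsilon^i)$, and then Theorem \ref{ergodic with imperfect info}, combined with Remark \ref{transitions at finite level}, concludes each case.

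The main obstacle is the $\text{Pell}_3$ case. There both the first \emph{and} the second row and column of the matrix are pinned simultaneously, so the sampling law for $(v, w)$ at step $i$ is uniform on an affine subspace of codimension up to three in $\mathbb{F}_2^{[i]} \times \mathbb{F}_2^{[i]} \times \mathbb{F}_2$, rather than the hyperplane shift encountered in $\text{Pell}_1$ and $\text{Pell}_2$. For Proposition \ref{transition as CL} to still deliver the $Q_{\text{C.L.}}$ transition on $\{Z_i = 0\}$, I would enlarge the definition of $Z_i$ to also demand that the defining functionals $e_1, e_2$ of the extra constraints lie outside $\ker(\omega_i(B))$ and $\ker(\omega_i(B)^T)$; equivalently, that the projection onto the first two coordinates is injective on each of the two kernels. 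Since each such condition fails with probability $O(2^{-i})$, this augmentation adds only an $O(2^{-i})$ contribution to $\mathbb{P}(Z_i = 1)$, preserving the geometric decay. A short case analysis then verifies that, under the augmented detector, the induced distribution of $(v, w)$ on $\text{Im}(\omega_i(B))^c \times \text{Im}(\omega_i(B)^T)^c$ still matches the R\'edei computation, so the corank transitions are unchanged and the argument concludes as before.
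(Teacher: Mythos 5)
Your global strategy — Hoeffding truncation to $\kappa \in [\epsilon r, (1-\epsilon)r]$, then a detector sequence $Z_i$ feeding into Theorem \ref{ergodic with imperfect info} and Remark \ref{transitions at finite level} — is exactly the paper's. However, there is a real gap in how you justify that the corank transitions according to $Q_{\text{C.L.}}$ on $\{Z_i=0\}$, and it already affects $\text{Pell}_1$ and $\text{Pell}_2$, not only $\text{Pell}_3$.

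In the rules $\mathcal{S}_{\text{Pell},j}(\kappa)$ the pair $(v,w)$ is sampled from a set cut out by an extra linear pin such as $\pi_1(v)=0$ or $\pi_1(w)=1$, so Proposition \ref{transition as CL} parts (2) and (3), which assume $(v,c)$ uniform on all of $\mathbb{F}_2^{[i]}\times\mathbb{F}_2$, do \emph{not} apply as stated; establishing $\text{Im}(\omega_i(B))+\text{Im}(\omega_i(B)^T)=\mathbb{F}_2^{[i]}$ on $\{Z_i=0\}$ is not enough. One also needs the pinned hyperplane to intersect $\text{Im}(\omega_i(B))$, $\text{Im}(\omega_i(B)^T)$ and their intersection in the ``generic'' proportions, i.e.\ $\text{Im}(\omega_i(B))\cap\text{Im}(\omega_i(B)^T)\not\subseteq\ker(\pi_1)$, equivalently $e_1\notin\ker(\omega_i(B))+\ker(\omega_i(B)^T)$. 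This is precisely the extra step in the paper, and it is genuinely structural, not random: for $\text{Pell}_1$ it holds automatically because the first row and column are pinned to $H_i(\kappa)$, making $\text{Im}(\omega_i(B)+\omega_i(B)^T)\subseteq\ker(\pi_1)$ while $\omega_i(B)e_1=H_i(\kappa)\notin\ker(\pi_1)$; for $\text{Pell}_2$ it holds automatically when $\min(i,\kappa)$ is odd, and when it is even one must add a further concrete linear condition on $\omega_i(B)^T$ against a specific pinned vector and bound its failure probability by a union bound. Your write-up skips this entirely for $\text{Pell}_1,\text{Pell}_2$ and only gestures at it for $\text{Pell}_3$.

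The augmented detector you propose for $\text{Pell}_3$ does not repair this. Demanding $e_1,e_2\notin\ker(\omega_i(B))$ and $\ker(\omega_i(B)^T)$ is strictly weaker than the condition that is actually required, $e_1\notin\ker(\omega_i(B))+\ker(\omega_i(B)^T)$: one can have $e_1=k_1+k_2$ with $k_1\in\ker(\omega_i(B))$, $k_2\in\ker(\omega_i(B)^T)$, neither equal to $e_1$ or $e_2$, so all four of your conditions hold while the needed one fails. Your alternative phrasing, that $\pi_{[2]}$ is injective on each kernel, is not equivalent to the four conditions and, taken literally, forces corank at most $2$, which occurs with probability bounded away from $1$ and hence cannot serve as a geometrically decaying detector. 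Finally, even if you replaced your conditions by the correct one $e_1\notin\ker(\omega_i(B))+\ker(\omega_i(B)^T)$, the asserted $O(2^{-i})$ bound on its failure probability is not free: because the Pell matrices are deterministically constrained, one must carry out the structural/parity analysis (as the paper does for $\text{Pell}_2$ and then re-uses, conditioned on the fixed second row, for $\text{Pell}_3$) to see that the bad event is either impossible or confined to a small explicit set of candidate vectors. That analysis is the actual content of the proof and is missing here.
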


\begin{proof}
The proof is the same as the proof of Theorem \ref{limiting distribution for redei matrices} except for the choice of the variables $Z_i$. We shall only focus on this aspect, provide the upper bound for $\mathbb{P}(Z_i = 1)$ in each case, and then refer to the proof of Theorem \ref{limiting distribution for redei matrices}. Let us first prove (1).

We start with the case $j = 1$. If $i \leq \kappa$, we put $Z_i(\kappa)(B) = 0$ in case $\omega_i(B) \cdot H_i \neq 0$ and we put $Z_i(\kappa)(B) = 1$ otherwise. Instead for $i > \kappa$ we put $Z_i(\kappa)(B) = 0$ in case the bottom right $i - 1 \times i - \kappa$ submatrix of $\omega_i(B)$ has full rank and furthermore for each vector $x \in \mathbb{F}_2^{[i]}$ with $\pi_{[\kappa]}(x) = H_{\kappa}$ we have that $\omega_i(B) x \neq 0$.  

Following the proof of Theorem \ref{limiting distribution for redei matrices} one can check that $Z_i(B) = 0$ implies that $\text{Im}(\omega_i(B)) + \text{Im}(\omega_i(B)^T) = \mathbb{F}_2^{[i]}$. However we also want to guarantee that
$$
\text{Im}(\omega_i(B)) \cap \text{Im}(\omega_i(B)^T) \not \subseteq \text{ker}(\pi_1).
$$
Indeed, this ensures that we get the same transitioning probabilities as in the proof of Theorem \ref{limiting distribution for redei matrices} when we restrict to vectors whose first component is fixed. Taking $\perp$ we see that the condition is equivalent to 
$$
e_1 \not \in \text{ker}(\omega_i(B)) + \text{ker}(\omega_i(B)^T).
$$
Suppose
$$
e_1 = t_1 + t_2
$$
with $t_1 \in \text{ker}(\omega_i(B)^T)$ and $t_2 \in \text{ker}(\omega_i(B))$. If we apply $\omega_i(B)$ to the above equality, we obtain
$$
H_i(\kappa) = \omega_i(B) t_1= (\omega_i(B) + \omega_i(B)^T) t_1.
$$
But this last equality is impossible since $\text{Im}(\omega_i(B) + \omega_i(B)^T) \subseteq \text{ker}(\pi_1)$, while $H_i(\kappa)$ has first coordinate non-zero. Hence this condition is automatically satisfied, and we may proceed as in Theorem \ref{limiting distribution for redei matrices}. 

Let us now consider $j = 2$. With the same definition of $Z_i$ as in the case $j = 1$, we still have
$$
\text{Im}(\omega_i(B)) + \text{Im}(\omega_i(B)^T) = \mathbb{F}_2^{[i]}.
$$
However, we also want to guarantee that 
$$
e_1 \not \in \text{ker}(\omega_i(B)) + \text{ker}(\omega_i(B)^T).
$$ 
This time the equality becomes
\begin{align}
\label{eContra}
e_1 = (\omega_i(B) + \omega_i(B)^T) t_1.
\end{align}
The matrix $\omega_i(B) + \omega_i(B)^T$ has as top left $\text{min}(i, \kappa) \times \text{min}(i, \kappa)$ minor the matrix with zeroes on the diagonal and ones everywhere else. All other entries of the matrix $\omega_i(B) + \omega_i(B)^T$ are zero.

In case $\text{min}(i, \kappa)$ is odd, then equation (\ref{eContra}) is impossible, since the image of $\omega_i(B) + \omega_i(B)^T$ is in that case contained in the sum zero space. In case $\text{min}(i, \kappa)$ is even, we conclude that 
\[
\pi_{[\text{min}(i, \kappa)]}(t_1) = (0, H_{\text{min}(i, \kappa) - 1}).
\]
Hence it is sufficient to further demand that $\omega_i(B)^T x \neq 0$ for every vector $x$ with projection in the first $\text{min}(i, \kappa)$  coordinates equal to $(0, H_{\text{min}(i, \kappa) - 1})$. This is still at most $\frac{1}{2^{i}}$ for $i \leq \kappa$ and by the union bound no more than
$$
\frac{1}{2^{\kappa}}
$$
for $i > \kappa$. Hence with this small modification, one can proceed as in the proof of Theorem \ref{limiting distribution for redei matrices}. 

For the proof of part $(2)$, we additionally \emph{fix} the second row and then bound the conditional probabilities with the same considerations used as in part $(1)$ for $j = 2$. 
\end{proof}


\begin{thebibliography}{19}
\bibitem{BST}
M. Bhargava, A. Shankar and J. Tsimerman.
On the Davenport-Heilbronn theorems and second order terms.
\textit{Invent. Math.} 193:439-499, 2013.

\bibitem{cohen--lenstra}
H. Cohen and H. W. Lenstra.
Heuristics on class groups of number fields.  
Number theory, {N}oordwijkerhout 1983, 
\textit{Lecture Notes in Math.},
\textit{Springer, Berlin}, 1984,
33-62.

\bibitem{DH}
H. Davenport and H. Heilbronn. 
On the density of discriminants of cubic fields, II. 
\textit{Proc. Roy. Soc. Lond. A}, 322:405-420, 1971. 

\bibitem{EVW}
J. Ellenberg, A. Venkatesh and C. Westerland.
Homological stability for Hurwitz spaces and the Cohen-Lenstra conjecture over function fields.
\textit{Ann. of Math.} 183:729-786, 2016.
 
\bibitem{FK1}
\'E. Fouvry and J. Kl\"uners. 
Cohen-Lenstra heuristics of quadratic number fields.
Algorithmic number theory, 
\textit{Lecture Notes in Comput. Sci.}, 4076, 
\textit{Springer, Berlin}, 2006,
40-55.

\bibitem{fouvry--kluners}
\'E. Fouvry and J. Kl\"uners.
On the 4-rank of class groups of quadratic number fields.
\textit{Invent. Math.}, 167:455-513, 2007.

\bibitem{FKP}
\'E. Fouvry, P. Koymans and C. Pagano.
On the $4$-rank of class groups of Dirichlet bi-quadratic fields. 
\textit{arXiv preprint}, 2003.11406.

\bibitem{Friedman-Washington}
E. Friedman and L. Washington. 
On the distribution of divisor class groups of curves over a finite field. 
\textit{Proceedings of the International Number Theory Conference held at Universit\'e Laval July 5-18}, 1987, 227-239.

\bibitem{Gauss}
C.F. Gauss.
Disquisitiones Arithmeticae.
1801.

\bibitem{Gerth}
F. Gerth.
The $4$-class ranks of quadratic fields. 
\textit{Invent. Math}., 77:498-515, 1984. 

\bibitem{Gerth2}
F. Gerth. 
Densities for certain $l$-ranks in cyclic fields of degree $l^n$. 
\textit{Compos. Math.}, 60:295-322, 1986.

\bibitem{Gerth3}
F. Gerth.
Densities for ranks of certain parts of $p$-class groups.
\textit{Proc. Amer. Math. Soc.}, 99:1-8, 1987.

\bibitem{HB}
D.R. Heath-Brown. 
The size of Selmer groups for the congruent number problem, II. 
\textit{Invent. Math.}, 118:331-370, 1994.

\bibitem{Klys}
J. Klys. 
The distribution of $p$-torsion in degree $p$ cyclic fields. 
\textit{Algebra and Number Theory}, To Appear.

\bibitem{KP1}
P. Koymans and C. Pagano.
On the distribution of $\text{Cl}(K)[l^{\infty}]$ for degree $l$ cyclic fields. 
\textit{arXiv preprint}, 1812.06884.

\bibitem{KP2}
P. Koymans and C. Pagano.
Higher genus theory. 
\textit{arXiv preprint}, 1909.13871.

\bibitem{MT}
S.P. Meyn and R.L. Tweedie.
Markov chains and stochastic stability. 
Communications and Control Engineering Series. 
\textit{Springer-Verlag London, Ltd., London}, 1993, xvi+ 548 pp.
	
\bibitem{Pagano--Sofos}
C. Pagano and E. Sofos.
$4$-ranks and the general model for statistics of ray class groups of imaginary quadratic fields. 
\textit{arXiv preprint}, 1710.07587.

\bibitem{Redei}
L. R\'edei. 
Arithmetischer Beweis des Satzes \"uber die Anzahl der durch vier teilbaren Invarianten der absoluten Klassengruppe im quadratischen Zahlk\"orper. \textit{J. Reine Angew. Math.}, 171:55-60, 1934.

\bibitem{smith1}  
A. Smith.
Governing fields and statistics for {$4$}-Selmer groups, {$8$}-class groups.
\textit{arXiv preprint}, 1607.07860.

\bibitem{Smith}
A. Smith.
$2^\infty$-Selmer Groups, $2^\infty$-class groups, and Goldfeld's conjecture.
\textit{arXiv preprint}, 1702.02325v2.

\bibitem{Tao}
T. Tao.
An introduction to measure theory. 
Graduate studies in mathematics.
\textit{American Mathematical Society}, 2013, xvi+ 206 pp.

\bibitem{Wood2}
M.M. Wood.
Random integral matrices and the Cohen--Lenstra Heuristics.
\textit{Amer. J. Math.}, To Appear.

\bibitem{Wood1}  
M.M. Wood.
Non-abelian Cohen--Lenstra moments.
\textit{Duke Math. J.}, To Appear.
\end{thebibliography}
\end{document}